\def\dive{\operatorname{div}}
\numberwithin{equation}{section}
\newtheorem{theorem}{Theorem}[section]
\newtheorem{lemma}[theorem]{Lemma}
\newtheorem{definition}[theorem]{Definition}
\newtheorem{remark}[theorem]{Remark}
\newtheorem{corollary}[theorem]{Corollary}
\newcommand{ \mint }{ {\int\hspace{-0.38cm}-}}
\begin{document}
	
\title[\hfil Regularity theory for mixed local and nonlocal\dots] {Regularity theory for mixed local and nonlocal parabolic $p$-Laplace equations}

\author[B. Shang, Y. Fang and C. Zhang  \hfil \hfilneg]
{Bin Shang, Yuzhou Fang and Chao Zhang$^*$}

\thanks{$^*$Corresponding author.}

\address{Bin Shang \hfill\break
School of Mathematics, Harbin Institute of Technology,
Harbin 150001, P.R. China} \email{shangbin0521@163.com}

\address{Yuzhou Fang \hfill\break
School of Mathematics, Harbin Institute of Technology,
Harbin 150001, P.R. China} \email{18b912036@hit.edu.cn}

\address{Chao Zhang\hfill\break
School of Mathematics and Institute for Advanced Study in Mathematics, Harbin Institute of Technology,
Harbin 150001, P.R. China} \email{czhangmath@hit.edu.cn}

\subjclass[2010]{35B45, 35B65, 35D30, 35K55, 35R11}.
\keywords{Local boundness; H\"{o}lder continuity; mixed local and nonlocal parabolic $p$-Laplcace equation}

\maketitle

\begin{abstract}
	We investigate the mixed local and nonlocal parabolic $p$-Laplace equation
	\begin{align*}
	\partial_t u(x,t)-\Delta_p u(x,t)+\mathcal{L}u(x,t)=0,
	\end{align*}
	where $\Delta_p$ is the local $p$-Laplace operator and
	$\mathcal{L}$ is the nonlocal $p$-Laplace operator. Based on the combination of suitable Caccioppoli-type inequality and Logarithmic Lemma with a De Giorgi-Nash-Moser iteration, we establish the local boundedness and H\"{o}lder continuity of weak solutions for such equations.
\end{abstract}

\section{Introduction}
\label{sec1}
\par

In this paper, we are concerned with the local behaviour of weak solutions to the following mixed problem
\begin{align}
\label{1.1}
\partial_t u(x,t)-\Delta_p u(x,t)+\mathcal{L}u(x,t)=0 \quad \text{in } Q_T,\quad  1<p<\infty,
\end{align}
where $Q_T:=\Omega\times(0,T)$ with $T>0$ and $\Omega$ is a bounded domain in $\mathbb{R}^N$. This kind of evolution equations arises from the L\'{e}vy process, image processing etc; see \cite{DV21} and references therein.
The local $p$-Laplace operator $\Delta_p$ is defined as
\begin{align*}
\Delta_p u:=\dive(|\nabla u|^{p-2}\nabla u),
\end{align*}
and $\mathcal{L}$ is a nonlocal $p$-Laplace operator given by
\begin{align}
\label{1.3}
\mathcal{L}u(x,t)=\mathrm{P.V.}\int_{\mathbb{R}^N}K(x,y,t)|u(x,t)-u(y,t)|^{p-2}(u(x,t)-u(y,t))\,dy,
\end{align}
where the symbol $\mathrm{P.V.}$ stands for the Cauchy principle value. Here $K$ is a symmetric kernel fulfilling
$$
K(x,y,t)=K(y,x,t)
$$
and
\begin{align}
\label{1.4}
\frac{\Lambda^{-1}}{|x-y|^{N+sp}}\leq K(x,y,t)\leq \frac{\Lambda}{|x-y|^{N+sp}}
\end{align}
with $\Lambda\geq1$ and $0<s<1$ for all $x,y\in\mathbb{R}^N$ and $t\in(0,T)$.

Before stating our main results, let us mention some known results. For the nonlocal parabolic equations of $p$-Laplacian type,
\begin{equation}
\label{1.4.1}
\partial_tu(x,t)+\mathcal{L}u(x,t)=0,
\end{equation}
the existence and uniqueness of strong solutions were verified by V\'{a}zquez \cite{Vaz16}, where the author studied the long time behaviours as well. Maz\'{o}n-Rossi-Toledo \cite{MRT16} established the well-posedness  of solutions to Eq. \eqref{1.4.1} together with the asymptotic property. When it comes to regularity theory for this equation, Str\"{o}mqvist \cite{Str19} obtained the existence and local boundedness of weak solutions provided $p\ge 2$.  H\"{o}lder regularity with specific exponents in the case $p\ge 2$ was proved by  Brasco-Lindgren-Str\"{o}mqvist  \cite{BLS}.  Furthermore, Ding-Zhang-Zhou \cite{DZZ21} showed the local boundedness and H\"{o}lder continuity of weak solutions to the nonhomogeneous case under the conditions that $1<p<\infty$ and $2<p<\infty$ respectively. We refer the readers to \cite{CCV11,FK13,Kim19,Vaz20,Vaz} and references therein for more results.

In the mixed local and nonlocal setting, for the case $p=2$,
\begin{equation}
\label{1.4.2}
-\Delta u+(-\Delta)^su=0,
\end{equation}
Foondun \cite{Fo9} have derived Harnack inequality and interior H\"{o}lder estimates for nonnegative solutions, see also \cite{CKSV12} for a diverse approach. In addition, the Harnack inequality regarding the parabolic version of \eqref{1.4.2} was established in \cite{BBCK9,CK10}, where however the authors only proved such inequality for globally nonnegative solutions. Very recently, Garain-Kinnunen \cite{GK21} proved a weak Harnack inequality with a tail term for sign changing solutions to the parabolic problem of \eqref{1.4.2}. For what concerns maximum principles, interior sobolev regularity along with symmetry results among many other quantitative and qualitative properties for solutions to \eqref{1.4.2}, one can see for instance \cite{BDVV20,BDVV,DLV,DLV21,DRSV}. In the nonlinear framework (i.e., $p\neq2$), Garain-Kinnunen \cite{GK} developed the local regularity theory for
$$
-\Delta_pu+\mathrm{P.V.}\int_{\mathbb{R}^N}K(x,y)|u(x)-u(y)|^{p-2}(u(x)-u(y))\,dy=0
$$
with $K(x,y)\simeq |x-y|^{-(N+sp)}$, involving boundedness, H\"{o}lder continuity, Harnack inequality as well as lower/upper semicontinuity of weak supersolutions/subsolutions.
Nonetheless, to the best of our knowledge, there are little results concerning on the mixed local and nonlocal nonlinear parabolic problems. To this end, we aim to establish the interior H\"{o}lder regularity and boundedness of weak slutions to Eq. \eqref{1.1}, which could be regarded as an analogue of \cite{DZZ21,GK}. It is noteworthy that our results are new even for the case $p=2$.

Before giving the notion of weak solutions to \eqref{1.1}, let us recall the tail space
\begin{align*}
L_\alpha^q(\mathbb{R}^{N}):=\left\{v \in L_{\rm{loc}}^q(\mathbb{R}^{N}): \int_{\mathbb{R}^N} \frac{|v(x)|^q}{1+|x|^{N+\alpha}}\,dx<+\infty\right\}, \quad q>0 \text{ and } \alpha>0.
\end{align*}
Then we define the tail appearing in estimates throughout this article,
\begin{align}
\label{1.5}
\mathrm{Tail}_\infty(v;x_{0},r,I) &=\mathrm{Tail}_{\infty}(v; x_0,r,t_0-T_1,t_0+T_2)\nonumber\\
&:=\operatorname*{ess\,\sup}_{t \in I}\left(r^p \int_{\mathbb{R}^{N} \backslash B_r(x_{0})} \frac{|v(x, t)|^{p-1}}{|x-x_0|^{N+sp}}\,dx\right)^{\frac{1}{p-1}},
\end{align}
where $(x_0, t_0)\in\mathbb{R}^N\times(0,T)$ and the interval $I=\left[t_0-T_1, t_0+T_2\right] \subseteq(0,T)$. This is a parabolic counterpart to the tail introduced in \cite{DKP16}. It is easy to check that $\mathrm{Tail}_\infty(v;x_0,r,I)$ is well-defined for any $v \in L^{\infty}(I; L_{sp}^{p-1}(\mathbb{R}^{N}))$.

\begin{definition}
	\label{def-1-1}
	A function $u\in L^p(I;W_{\rm {loc}}^{s,p}(\Omega)) \cap C(I;L_{\rm {loc}}^2(\Omega)) \cap L^{\infty}(I;L_{sp}^{p-1}(\mathbb{R}^N))$ is a local weak subsolution (super-) to \eqref{1.1} if for any closed interval $I:=[t_1, t_2] \subseteq(0,T)$, there holds that
	\begin{align}
	\label{1.6}
	&\int_\Omega u(x,t_2)\varphi(x,t_2)\,dx-\int_\Omega u(x,t_1)\varphi(x,t_1)\,dx
	-\int_{t_1}^{t_2} \int_\Omega u(x,t)\partial_t \varphi(x,t)\,dxdt\nonumber\\
	&\quad+\int_{t_1}^{t_2}\int_\Omega|\nabla u|^{p-2}\nabla u\cdot \nabla\varphi\,dxdt +\int_{t_1}^{t_2} \mathcal{E}(u,\varphi,t)\,dt \leq(\geq) 0,
	\end{align}
	for every nonnegative test function $\varphi \in L^p (I;W^{s,p}(\Omega))\cap W^{1,2}(I;L^2(\Omega))$ with the property that $\varphi$ has spatial support compactly contained in $\Omega$, where
	\begin{align*}
	\mathcal{E}(u,\varphi,t):=\frac{1}{2} \int_{\mathbb{R}^N} \int_{\mathbb{R}^N}&\Big[|u(x,t)-u(y,t)|^{p-2}(u(x,t)-u(y,t))\\
	&(\varphi(x,t)-\varphi(y,t))K(x,y,t)\Big]\,dxdy.
	\end{align*}
	A function $u$ is a local weak solution to \eqref{1.1} if and only if $u$ is a local weak subsolution and supersolution.
\end{definition}

We now are in a position to state the main contribution of this work. First, we provide the local boundedness of weak solutions in the cases that $p>\frac{2N}{N+2}$ and $1<p\leq \frac{2N}{N+2}$. For two real number, set
\begin{align*}
a \vee b:=\max \{a,b\}, \quad a_+:=\max \{a,0\}, \quad a_-:=-\min \{a,0\}.
\end{align*}

\begin{theorem} [Local boundedness]
	\label{thm-1-2}
	Let $p> 2N/(2+N)$ and $q:=\max\{p,2\}$. Assume that $u$ is a local weak subsolution to \eqref{1.1}. Let $(x_0,t_0) \in Q_T$, $R \in(0,1)$ and $Q_R^- \equiv B_R(x_0) \times(t_0-R^p,t_0)$ such that $\overline{B}_R(x_0)\subseteq\Omega$ and $[t_0-R^p, t_0] \subseteq(0,T)$. Then it holds that
	\begin{align*}
	\operatorname*{ess\,\sup}_{Q_{R/2}^-} u\leq \mathrm{Tail}_\infty\left(u_+;x_0,R/2,t_0-R^p, t_0\right)+C\left(\mint_{Q_R^-} u_+^q\,dx dt\right)^{\frac{p}{N(p\kappa-q)}} \vee 1,
	\end{align*}
	where $\kappa:=1+2/N$ and $C>0$ only depends on $N,p,s$ and $\Lambda$.
\end{theorem}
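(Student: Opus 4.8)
The plan is to run a De Giorgi--Nash--Moser iteration on a sequence of shrinking parabolic cylinders, controlling both the local $p$-Laplacian and the nonlocal contribution at each step. First I would fix the shrinking radii $R_j := \frac{R}{2} + \frac{R}{2^{j+1}}$ and cylinders $Q_j := B_{R_j}(x_0) \times (t_0 - R_j^p, t_0)$, together with the increasing truncation levels $k_j := k\,(1 - 2^{-j})$ for a parameter $k>0$ to be chosen, and work with $w_j := (u - k_j)_+$. Using a smooth cutoff $\zeta_j$ in space which is $1$ on $B_{R_{j+1}}$, supported in $B_{R_j}$, with $|\nabla \zeta_j| \lesssim 2^j/R$, and a cutoff in time that vanishes near the bottom of $Q_j$, I would insert the test function $\varphi = w_j^{} \zeta_j^p \,\theta$ (appropriately mollified in time to make it admissible, then pass to the limit) into the weak formulation \eqref{1.6}.

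The core of the argument is the resulting energy estimate: the parabolic term gives $\operatorname{ess\,sup}_t \int_{B_{R_j}} w_j^2 \zeta_j^p \,dx + \iint_{Q_j} |\nabla(w_j \zeta_j)|^p \,dx\,dt$ on the left; the local gradient term is handled by the standard Young's inequality absorption; and the nonlocal term $\mathcal{E}(u,\varphi,t)$ is split, as in \cite{GK}, \cite{DKP16}, into a "good" part producing $\iint \int |w_j(x)-w_j(y)|^{p}K\,dx\,dy\,dt$ (which we simply discard, being nonnegative up to controllable errors) and a "tail" part, where on $\mathbb{R}^N \setminus B_{R_j}$ one bounds $|u(y,t)-k_j|^{p-1}$ crudely by $C\,(u_+(y,t)^{p-1} + k_j^{p-1})$ and pulls out $r^p \int_{\mathbb{R}^N\setminus B_r}\frac{u_+^{p-1}}{|x-x_0|^{N+sp}}\,dx \le \mathrm{Tail}_\infty(u_+;x_0,R/2,I)^{p-1}$. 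This is precisely why the tail term appears additively in the conclusion; choosing $k \ge \mathrm{Tail}_\infty(u_+;x_0,R/2,t_0-R^p,t_0)$ lets us absorb those contributions. The Caccioppoli-type inequality and the time-derivative handling here are exactly what the paper advertises as already available, so I would cite the Caccioppoli estimate proved earlier rather than rederive it.

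Next I would convert the energy estimate into a reverse-type inequality on the measure of the superlevel sets. Applying the parabolic Sobolev embedding (the Gagliardo--Nirenberg--Sobolev inequality interpolating $L^\infty_t L^2_x$ with $L^p_t W^{1,p}_x$) to $w_j \zeta_j$ yields, with $\kappa = 1 + 2/N$, a bound of the form
\begin{align*}
\iint_{Q_{j+1}} w_j^{p\kappa}\,dx\,dt \le C \left(\iint_{Q_j} w_j^{q} + w_j^{p}\,dx\,dt\right)^{\kappa},
\end{align*}
after accounting for the mismatch between the exponent $2$ coming from the time term and the exponent $p$; this is where $q = \max\{p,2\}$ enters and where one must be a little careful when $p < 2$ versus $p \ge 2$. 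Setting $Y_j := \mint_{Q_j} w_j^{q}\,dx\,dt$ (or its obvious analogue with the correct normalization) and using $w_j \ge k\,2^{-(j+1)}$ on $\{u > k_{j+1}\}$ together with Hölder to pass between the exponents $p\kappa$ and $q$, I would derive the recursive inequality $Y_{j+1} \le C\, b^{j}\, Y_j^{1+\delta}$ for explicit constants $b>1$ and $\delta>0$ depending only on $N,p$ (here $\delta$ is governed by the gap $p\kappa - q > 0$, which holds precisely because $p > 2N/(N+2)$). The fast-geometric-convergence lemma then gives $Y_j \to 0$ provided $Y_0 \le C^{-1/\delta} b^{-1/\delta^2}$, i.e. provided $k$ is taken to be a fixed constant multiple of $\big(\mint_{Q_R^-} u_+^q\big)^{p/(N(p\kappa - q))} \vee 1$ plus the tail; unraveling the definition of $\delta$ reproduces exactly the exponent $\frac{p}{N(p\kappa - q)}$ in the statement. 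Then $Y_\infty = 0$ forces $u \le k$ a.e. on $Q_{R/2}^-$, which is the claim.

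The main obstacle, I expect, is the bookkeeping in the nonlocal tail estimate combined with the sub-quadratic case $p < 2$: one has to ensure that the crude splitting of $|u(x)-u(y)|^{p-2}(u(x)-u(y))(\varphi(x)-\varphi(y))$ on the region where $x \in B_{R_{j+1}}$ and $y$ is far away produces terms that are genuinely controlled by $k_j^{p-1}$ and the tail of $u_+$, uniformly in $j$, with constants that do not blow up as $j \to \infty$ faster than geometrically; and simultaneously, when $p < 2$, the interpolation converting the $L^\infty_t L^2_x \cap L^p_t W^{1,p}_x$ bound into an $L^{p\kappa}$ estimate must be carried out so that the exponent defect $\delta$ stays strictly positive and the recursion still closes. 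The treatment of the time term — justifying the use of $w_j \zeta_j^p$ as a test function via Steklov averaging and handling the initial slice using the time cutoff — is routine but must be done cleanly so that the "$\operatorname{ess\,sup}$ in $t$" on the left-hand side is legitimate.
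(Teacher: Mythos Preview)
Your proposal is correct and follows essentially the same route as the paper: the proof there sets up the identical shrinking cylinders $r_j=R/2+2^{-j-1}R$ and levels $k_j=(1-2^{-j})\tilde{k}$, invokes the Caccioppoli inequality (Lemma~\ref{lem-3-1}/\ref{lem-4-1}) together with the parabolic Sobolev embedding (Lemma~\ref{lem-2-3}) to produce the recursion $W_{j+1}\le C\,2^{bj}\tilde{k}^{-q(1-q/(p\kappa))}W_j^{1+q/(N\kappa)}$ for $W_j=Y_j/r^p$, and then chooses $\tilde{k}$ to dominate both the tail and $C\big(\mint_{Q_R^-}u_+^q\big)^{p/(N(p\kappa-q))}\vee 1$ so that Lemma~\ref{lem-4-4} forces $W_j\to 0$. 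The only cosmetic discrepancy is that your displayed Sobolev bound $\iint w_j^{p\kappa}\le C(\iint w_j^q+w_j^p)^\kappa$ is already the post-Caccioppoli form; in the paper this appears as the product of the gradient/energy term with $(\operatorname*{ess\,\sup}_t\mint w_j^2)^{p/N}$, each of which is then controlled via the elementary inequality $\tilde{w}_j^\tau\le C2^{(q-\tau)j}\tilde{k}^{-(q-\tau)}w_j^q$.
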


	In the scenario that $1<p\leq2N/(N+2)$, assuming that the weak subsolution have the following constructions: for $m>\max\left\{2,\frac{N(2-p)}{p}\right\}$, there exists a sequence of $\{u_k\}_{k\in \mathbb{N}}$ whose components are
bounded subsolutions of \eqref{1.1} such that
	\begin{align}
	\label{1.8}
	\|u_k\|_{L_{\rm{loc}}^\infty(0,T;L_{sp}^{p-1}(\mathbb{R}^N))}\leq C
	\end{align}
	and
	\begin{align}
	\label{1.9}
	u_k\rightarrow u \quad \text{in } L_{\rm{loc}}^m(Q_T) \text{ as } k\rightarrow\infty.
	\end{align}

\begin{theorem} [Local boundedness]
	\label{thm-1-4}
	Let $1<p\leq 2N/(N+2)$, $\kappa=1+2/N$ and $m>\max\left\{2,\frac{N(2-p)}{p}\right\}$. Suppose that $u\in L_{\rm{loc}}^m(Q_T)$ with the properties \eqref{1.8} and \eqref{1.9} is a local weak subsolution of \eqref{1.1}. Let $(x_0,t_0) \in Q_T$, $R \in(0,1)$ and $Q_R^- \equiv B_R(x_0) \times(t_0-R^p,t_0)$ such that $\overline{B}_R(x_0)\subseteq\Omega$ and $[t_0-R^p, t_0] \subseteq(0,T)$. Then it holds that
	\begin{align*}
	\operatorname*{ess\,\sup}_{Q_{R/2}^-} u&\leq \mathrm{Tail}_\infty\left(u_+;x_0,R/2,t_0-R^p, t_0\right)\\
	&\quad+C\left(\mint_{Q_R^-} u_+^m\,dx dt\right)^{\frac{p}{(N+p)(m-2-\beta)}} \vee \left(\mint_{Q_R^-} u_+^m\,dx dt\right)^{\frac{p}{(N+p)(m-p-\beta)}},
	\end{align*}
	where $\beta=N(m-p\kappa)/(p+N)$ and $C>0$ only depends on $N,p,s,m$ and $\Lambda$.
\end{theorem}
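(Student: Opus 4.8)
The plan is to run a De Giorgi–Nash–Moser iteration on a shrinking family of backward cylinders, but since in the singular range the scaling-invariant "intrinsic" energy space $L^p(I;W^{s,p})\cap L^\infty(I;L^2)$ fails to embed into a space with positive gain, we must iterate on a higher integrability exponent $m$ rather than on $L^2$. First I would introduce, for $j\ge 0$, the levels $k_j = k(1-2^{-j})$ with $k>0$ to be chosen, the radii $R_j = R/2 + R/2^{j+1}$, the times $\tau_j = t_0 - (R/2)^p - (R/2^{j+1})^p$, and the cylinders $Q_j = B_{R_j}(x_0)\times(\tau_j,t_0)$; set $w_j = (u-k_j)_+$. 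I would apply the Caccioppoli-type inequality for subsolutions (proved earlier in the paper, with its nonlocal tail contribution controlled by $\mathrm{Tail}_\infty(u_+;x_0,R/2,\cdot)$ once $k$ is at least that tail) to $w_j$ on $Q_j$, obtaining control of $\operatorname*{ess\,sup}_t \int_{B_{R_{j+1}}} w_{j+1}^2\,dx$ and of $\int\!\!\int_{Q_{j+1}} \int_{\mathbb{R}^N} |w_{j+1}(x)-w_{j+1}(y)|^p K\,dy\,dx\,dt$ together with the local gradient term $\int\!\!\int |\nabla w_{j+1}|^p$, all with a constant of size $C\,2^{pj}/R^p$ times $\int\!\!\int_{Q_j} w_j^p$ (plus lower-order terms absorbed using $k\vee 1$).

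Next I would interpolate. Using the Gagliardo–Nirenberg–Sobolev inequality in space (from the local $W^{1,p}$ part, or from the fractional $W^{s,p}$ part) together with the $L^\infty_t L^2_x$ bound, one gets a reverse-Hölder gain: $\int\!\!\int_{Q_{j+1}} w_{j+1}^{p\kappa} \le C\big(\text{energy}\big)^{\kappa}$ with $\kappa = 1 + 2/N$ — but the exponent $p\kappa$ must then be re-expressed in terms of the working exponent $m$. This is exactly where the constant $\beta = N(m-p\kappa)/(N+p)$ enters: one writes an interpolation of the $L^p$ (or $L^{p\kappa}$) norm of $w_j$ between $L^m$ and the energy-controlled norm, so that the quantity $Y_j := \mint_{Q_j} w_j^m$ satisfies a recursive inequality of De Giorgi type,
\begin{align*}
Y_{j+1} \le C\, b^{\,j}\, R^{-\gamma}\, Y_j^{\,1+\delta},
\end{align*}
for some $b>1$, $\gamma>0$ and $\delta>0$ depending only on $N,p,s,m$; the two distinct exponents appearing in the statement, $p/((N+p)(m-2-\beta))$ and $p/((N+p)(m-p-\beta))$, arise because the lower-order/inhomogeneous terms in the Caccioppoli estimate scale like $w_j^2$ and like $w_j^p$ respectively, giving two separate fast-geometric-convergence thresholds — hence the "$\vee$" in the conclusion. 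The standard fast-geometric-convergence lemma then gives $Y_j \to 0$ provided $Y_0 \le C^{-1/\delta} b^{-1/\delta^2} R^{\gamma/\delta}$, which upon unwinding is precisely the stated bound on $k$ in terms of $\mint_{Q_R^-} u_+^m$; choosing $k$ equal to the right-hand side then forces $\operatorname*{ess\,sup}_{Q_{R/2}^-}(u-k)_+ = 0$.

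The role of hypotheses \eqref{1.8}–\eqref{1.9} is to justify all of the above rigorously in the singular regime: the Caccioppoli inequality and the Logarithmic Lemma are first established for the bounded approximants $u_k$ (for which the energy estimates are legitimate), and then \eqref{1.9} (strong $L^m_{\mathrm{loc}}$ convergence) together with the uniform tail bound \eqref{1.8} lets us pass to the limit in every integral quantity — in particular the level-set measures $|\{u_k > \ell\}\cap Q_j|$ converge, and the $m>\max\{2,N(2-p)/p\}$ condition is exactly what makes $Y_0 = \mint w_j^m$ the right driving quantity (it guarantees $m-2-\beta>0$ and $m-p-\beta>0$, so both exponents in the conclusion are positive and the iteration closes). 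The main obstacle I anticipate is bookkeeping the interpolation so that the correct power $\beta$ and the correct geometric factor $b^j$ come out cleanly while simultaneously tracking that the nonlocal tail term, the local gradient term, and the two lower-order terms all fit into a single recursive inequality; once the exponents are matched, the De Giorgi lemma does the rest and the passage to the limit via \eqref{1.8}–\eqref{1.9} is routine.
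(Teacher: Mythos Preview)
Your outline has a genuine gap at the interpolation step. In the range $1<p\le 2N/(N+2)$ one has $p\kappa=p(1+2/N)\le 2<m$, so the parabolic Sobolev inequality (Lemma \ref{lem-2-3}) only upgrades the Caccioppoli energy to control of $\int\!\!\int \tilde w_j^{p\kappa}$, which lies \emph{below} the working exponent $m$. You cannot ``re-express $p\kappa$ in terms of $m$'' by interpolation in the direction you need: interpolation controls intermediate norms by extreme ones, not the other way around. What the paper actually does (Lemma \ref{lem-4-3}) is the crude estimate
\[
\int\!\!\int_{Q_{j+1}^-}\tilde w_j^{\,m}\le \|\tilde w_j\|_{L^\infty(Q_{j+1}^-)}^{\,m-p\kappa}\int\!\!\int_{Q_{j+1}^-}\tilde w_j^{\,p\kappa},
\]
so the recursive inequality for $Y_j$ carries an extra factor $\|u_+\|_{L^\infty}^{m-p\kappa}$ and does \emph{not} close on its own.

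Consequently the proof is a \emph{double} iteration, not the single De Giorgi loop you describe. One first fixes an outer index $n$ with radii $R_n=R/2+\sum_{i=1}^n 2^{-i-1}R$ and sets $M_n=\operatorname*{ess\,sup}_{Q_{R_n}^-}u_+$; running the inner De Giorgi iteration between $R_n$ and $R_{n+1}$ yields, after the fast-convergence lemma, an estimate of the form $M_n\le C\,M_{n+1}^{\theta}\cdot(\text{integral terms})+\tfrac12\mathrm{Tail}_\infty$ with $\theta=\beta/(m-p)$ or $\beta/(m-2)$, both strictly less than $1$ thanks to $m>\max\{2,N(2-p)/p\}$. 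Young's inequality then turns this into $M_n\le \varepsilon M_{n+1}+C(\varepsilon)\cdot(\ldots)$, and summing the resulting geometric series over $n$ gives the stated bound. The hypotheses \eqref{1.8}--\eqref{1.9} are needed not merely to pass to the limit at the end, but because the whole argument requires $u\in L^\infty_{\rm loc}$ \emph{a priori} (otherwise $M_{n+1}$ is infinite and the outer recursion is vacuous); one runs it for the bounded approximants $u_k$, observes the final estimate is uniform in $k$, and then lets $k\to\infty$.
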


Based on the boundedness result (Theorem \ref{thm-1-2}), we are able to deduce that the weak solutions are locally H\"{o}lder continuous for $p>2$.
\begin{theorem} [H\"{o}lder continuity]
	\label{thm-1-5}
	Let $p>2$.  Assume that $u$ is a local weak solution of \eqref{1.1}. Let $(x_0,t_0) \in Q_T$, $R \in(0,1)$ and $Q_R \equiv B_R(x_0) \times(t_0-R^p,t_0+R^p)$ such that $\overline{Q}_R\subseteq Q_T$. Moreover, there exists a constant $\alpha\in(0,p/(p-1))$ such that for every $\rho\in(0,R/2)$,
	\begin{align*}
	\operatorname*{ess\,osc}_{Q_{\rho, d \rho^p}} u<C\left(\frac{\rho}{R}\right)^\alpha\left[\mathrm{Tail}_\infty\left(u;x_0,R/2, t_0-R^p, t_0+R^p\right)+\left(\mint_{Q_R}|u|^p\,dxdt\right)^{\frac{1}{2}} \vee 1\right]
	\end{align*}
	with $d\in(0,1)$ and $C\geq 1$ depending on $N,p,s$ and $\Lambda$.
\end{theorem}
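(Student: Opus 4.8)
The plan is to follow DiBenedetto's method of intrinsic scaling combined with a De Giorgi--Nash--Moser iteration --- the same strategy that works for the purely local degenerate parabolic $p$-Laplacian --- while carrying a nonlocal tail term through every estimate. By Theorem~\ref{thm-1-2} applied to $\pm u$ the solution is locally bounded, and in fact
\[
\|u\|_{L^\infty(Q_{R/2})}\le M:=\mathrm{Tail}_\infty\big(u;x_0,R/2,t_0-R^p,t_0+R^p\big)+\Big(\mint_{Q_R}|u|^p\,dxdt\Big)^{1/2}\vee 1;
\]
this same quantity $M$ is the multiplicative constant appearing in the asserted oscillation bound. Throughout, the weak formulation \eqref{1.6} is made rigorous by passing to Steklov averages before testing with truncations of $u$ itself. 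The two technical inputs are a Caccioppoli-type inequality for the truncations $(u-k)_\pm$ and a Logarithmic Lemma for $u$, both established beforehand; the only feature that matters here is that each of them carries an extra nonlocal term of the form $\operatorname*{ess\,sup}_{t}\int_{\mathbb{R}^N\setminus B_r}(u-k)_\pm(y,t)^{p-1}|y-x_0|^{-N-sp}\,dy$ coming from $\mathcal{E}(u,\varphi,t)$.

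The heart of the argument is a single reduction-of-oscillation step. Fix an intrinsically scaled cylinder $Q_0=B_{R_0}(x_0)\times(t_0-d R_0^{p},t_0)$ with $R_0\le R/2$, write $\mu^+=\operatorname*{ess\,sup}_{Q_0}u$, $\mu^-=\operatorname*{ess\,inf}_{Q_0}u$ and $\omega=\mu^+-\mu^-$, and choose the time-scale parameter by $d\simeq(\omega/4)^{2-p}$ (the normalization being absorbed so that in the iterated statement $d$ becomes a fixed constant in $(0,1)$). One then confronts two alternatives: either
\[
\big|\{(x,t)\in Q_0 : u(x,t)>\mu^+-\tfrac{\omega}{2}\}\big|\le \tfrac12\,|Q_0|,
\]
or the complementary inequality holds. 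In the first case $v:=(u-(\mu^+-\tfrac{\omega}{2}))_+$ is a nonnegative subsolution that is small on at least half of $Q_0$, and a De Giorgi iteration over the levels $k_j=\mu^+-\tfrac{\omega}{4}-\tfrac{\omega}{4}2^{-j}$ on a nested sequence of sub-cylinders, driven by the Caccioppoli inequality and the parabolic Sobolev embedding, yields $\operatorname*{ess\,sup}_{Q_0/2}u\le\mu^+-\tfrac{\omega}{4}$. In the second case $u$ lies close to $\mu^+$ on most of $Q_0$; the Logarithmic Lemma propagates this measure information forward in time, so that $|\{u(\cdot,t)<\mu^-+\omega 2^{-j_0}\}\cap B_{R_0/2}|\le\nu\,|B_{R_0/2}|$ for a.e.\ $t$ in a suitable subinterval and a suitably large $j_0$, whereupon a De Giorgi iteration on $(u-k)_-$ gives $\operatorname*{ess\,inf}_{Q'}u\ge\mu^-+\omega 2^{-j_0-1}$ on a smaller intrinsic cylinder $Q'$. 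Either way $\operatorname*{ess\,osc}_{Q'}u\le\eta\,\omega$ with a fixed $\eta=\eta(N,p,s,\Lambda)\in(0,1)$.

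It remains to keep the tail from destroying the geometric decay. At a generic stage the nonlocal term produces $\mathrm{Tail}_\infty(u;x_0,\rho,\cdot)$ at scale $\rho$; splitting $\mathbb{R}^N\setminus B_\rho$ into the dyadic annuli contained in $B_R$ --- on which the oscillation already achieved at scale $\rho_j\simeq 2^j\rho$ gives $|u-\mu_j|\lesssim\omega_{\rho_j}\simeq(\rho_j/R)^\alpha M$ --- and the far region $\mathbb{R}^N\setminus B_R$ --- on which one invokes $u\in L^\infty(I;L^{p-1}_{sp}(\mathbb{R}^N))$ and the original tail inside $M$ --- one finds that the dyadic series $\rho^p\sum_j (\rho_j/R)^{\alpha(p-1)}\rho_j^{-sp}$, raised to the power $1/(p-1)$, converges and is bounded by a constant multiple of $\omega_\rho$ precisely when $\alpha(p-1)<p$. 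This is exactly the restriction $\alpha<p/(p-1)$. With this control in hand, the reduction-of-oscillation step is iterated over a sequence of intrinsic cylinders of geometrically shrinking radii, producing $\operatorname*{ess\,osc}_{Q_{\rho,d\rho^p}}u\le C(\rho/R)^\alpha M$ and hence the stated estimate, with $\alpha\in(0,p/(p-1))$ and $C,d$ depending only on $N,p,s,\Lambda$.

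The main obstacle is precisely this bookkeeping of the nonlocal term inside the De Giorgi iteration and the forward-in-time measure propagation: one must verify that at every level $k_j$ and every scale the extra tail term in the Caccioppoli and Logarithmic estimates is absorbed by the principal energy terms, which forces the intrinsic time scale $d\rho^p$ --- dictated by the local $p$-Laplacian, since $sp<p$ makes it the dominant operator at small scales --- to be matched carefully against the $|y-x_0|^{-N-sp}$ weight; and one must check that the two alternatives remain exhaustive after the choice of $d$ in terms of $\omega$. Compared with the classical local theory and the purely nonlocal results of \cite{BLS,DZZ21}, no essentially new idea is required, but every inequality must be re-derived retaining simultaneously the gradient contribution and the tail contribution, and the parameters $\nu$, $j_0$, $\eta$ have to be chosen so that all the smallness requirements are mutually compatible.
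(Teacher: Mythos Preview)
Your overall architecture --- local boundedness from Theorem~\ref{thm-1-2}, intrinsic cylinders with time scale $d\simeq\omega^{2-p}$, a Caccioppoli estimate plus a logarithmic estimate, a De Giorgi iteration, and the dyadic tail computation forcing $\alpha(p-1)<p$ --- matches the paper's proof (which is carried out in Lemma~\ref{lem-5-1}). The tail bookkeeping you describe is exactly the paper's estimate \eqref{5.10}. However, your handling of the \emph{first} alternative contains a genuine gap. Having $|\{u>\mu^+-\omega/2\}\cap Q_0|\le\tfrac12|Q_0|$ is not enough to launch the De Giorgi iteration directly: the recursion closes (via Lemma~\ref{lem-4-4}) only when the initial measure fraction lies below a structural threshold $\nu^*=\nu^*(N,p,s,\Lambda)$, and nothing guarantees $\nu^*\ge\tfrac12$. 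In the classical local theory the dichotomy is placed at $\nu_0$, not at $1/2$; with your dichotomy at $1/2$, the first alternative still needs a mechanism to pass from measure $1/2$ down to $\nu^*$ before De Giorgi can run. As written, the step ``De Giorgi iteration over the levels $k_j$ \dots\ yields $\operatorname*{ess\,sup}_{Q_0/2}u\le\mu^+-\omega/4$'' does not follow.

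The paper fixes this by symmetrizing rather than treating the two cases differently: it sets $u_j=u-\operatorname*{ess\,inf}_{Q_j}u$ in one case and $u_j=\omega(r_j)-(u-\operatorname*{ess\,inf}_{Q_j}u)$ in the other, so that both alternatives reduce to the single statement $|\{u_j\ge\omega(r_j)/2\}\cap 2Q_{j+1}|\ge\tfrac12|2Q_{j+1}|$. The logarithmic lemma is then used --- not as a forward-in-time propagation of pointwise measure information as you describe, but as a space--time Poincar\'e/BMO bound (Corollary~\ref{cor-3-3}) for the truncation $v=\min\big\{[\log((\omega(r_j)/2+d)/(u_j+d))]_+,\,k\big\}$ --- to conclude $|\{u_j\le 2\varepsilon\omega(r_j)\}|/|2Q_{j+1}|\le \overline C/\log(1/\sigma)$; choosing $\sigma$ small makes this $\le\nu^*$, and only then does the De Giorgi iteration (applied to $(k_i-u_j)_+$) produce $u_j\ge\varepsilon\omega(r_j)$ in $Q_{j+1}$ and hence the oscillation drop. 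In short: your second alternative is essentially right, but your first alternative needs the same logarithmic step; the paper's symmetrization handles both cases at once, and its version of the logarithmic estimate is of Poincar\'e type rather than the classical time-propagation form you invoke.
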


The paper is organized as follows. In Section \ref{sec2}, we gather some notations and auxiliary inequalities. Necessary energy estimates are showed in Section \ref{sec3}. Sections \ref{sec4} and \ref{sec5} are devoted to
proving the local boundedness and H\"{o}lder regularity of weak solutions, respectively.

\section{Preliminaries}
\label{sec2}

In this section, we first give some notations for clarity and then provide some important inequalities to be used later.

\subsection{Notation}

Let $B_\rho(x)$ be the open ball with radius $r$ and centered at $x\in\mathbb{R}^N$. We denote the parabolic cylinders by $Q_{\rho,r}(x, t):=B_\rho(x) \times(t-r,t+r)$, $Q_\rho(x,t):=Q_{\rho,\rho^p}(x,t)=B_\rho(x) \times(t-\rho^p,t+\rho^p)$ and $Q_\rho^-(x,t):=Q_{\rho,\rho^p}^-(x, t)=B_\rho(x) \times(t-\rho^p,t)$ with $r,\rho>0$ and $(x,t) \in \mathbb{R}^{N} \times(0,T)$. If not important, or clear from the context, we simply write these symbols by $B_\rho=B_\rho(x)$, $Q_{\rho,r}=Q_{\rho,r}(x,t)$, $Q_\rho=Q_\rho(x,t)$ and $Q_\rho^-=Q_\rho^-(x,t)$. Moreover for $g\in L^1(V)$, we denote the integral average of $g$ by
\begin{align*}
(g)_{V}:=\mint_{V} g(x)\,dx:=\frac{1}{|V|} \int_{V} g(x)\,dx.
\end{align*}
Define
\begin{align*}
J_{p}(a, b)=|a-b|^{p-2}(a-b)
\end{align*}
for any $a, b \in \mathbb{R}$.
We also use the notation
\begin{align*}
d\mu=d\mu(x,y,t)=K(x,y,t)\,dxdy.
\end{align*}
It is worth mentioning that the constant $C$ represents a general positive constant which may differ from each other.

Next, we will show several fundamental but very useful Sobolev inequalities.

\subsection{Sobolev inequalities}

\begin{lemma}
	\label{lem-2-1}
	Let $1 \leq p$, $\ell\leq q<\infty$ satisfy $\frac{N}{p}-\frac{N}{q}\leq 1$ and
	\begin{align*}
	\theta\left(1-\frac{N}{p}+\frac{N}{q}\right)+(1-\theta)\left(\frac{N}{q}-\frac{N}{\ell}\right)=0
	\end{align*}
	with $\theta\in(0,1)$. Then there exists a constant $C>0$ only depending on $N, p, q, \ell$ such that
	\begin{align}
	\label{2.1}
	\|u\|_{L^q(B_1)} \leq C\|Du\|_{L^p(B_1)}^\theta \|u\|_{L^\ell(B_1)}^{1-\theta}
	\end{align}
	for all $u \in W^{1,p}(B_1) \cap L^\ell(B_1)$.
\end{lemma}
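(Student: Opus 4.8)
The statement to prove is a Gagliardo--Nirenberg--Sobolev interpolation inequality on the unit ball, and my plan is to reduce it to the classical Sobolev embedding combined with interpolation of Lebesgue norms. \emph{First} I would dispose of the scaling check: the hypothesis on $\theta$ is precisely the dimensional-balance condition obtained by rescaling $u(x)\mapsto u(\lambda x)$, so the inequality is scale-consistent and it suffices to prove it on $B_1$ with some fixed constant. \emph{Second}, I would split into the two natural regimes. If $\frac Np-\frac Nq=1$ then $\theta$ can be taken as any value in $(0,1)$ forced by the second relation, and one is simply in the borderline Sobolev case $W^{1,p}(B_1)\hookrightarrow L^{p^*}(B_1)$ (or rather $q\le p^*$), so $\|u\|_{L^q(B_1)}\le C\|u\|_{W^{1,p}(B_1)}=C(\|Du\|_{L^p(B_1)}+\|u\|_{L^p(B_1)})$; one then needs an extra interpolation argument to trade the full $W^{1,p}$ norm for the product $\|Du\|_{L^p}^\theta\|u\|_{L^\ell}^{1-\theta}$.

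\emph{The core of the argument} is the subcritical case $\frac Np-\frac Nq<1$. Here I would first apply the Sobolev--Poincaré inequality on $B_1$: writing $\bar u=(u)_{B_1}$, one has $\|u-\bar u\|_{L^{p^*}(B_1)}\le C\|Du\|_{L^p(B_1)}$ with $p^*=\frac{Np}{N-p}$ (or any exponent below $p^*$, and for $p\ge N$ any finite exponent), hence by H\"older $\|u-\bar u\|_{L^{q}(B_1)}\le C\|Du\|_{L^p(B_1)}$ provided $q\le p^*$, which is exactly what $\frac Np-\frac Nq\le 1$ guarantees. Next I would control $\bar u$: $|\bar u|\le C\|u\|_{L^\ell(B_1)}$ for any $\ell\ge 1$, so $\|u\|_{L^q(B_1)}\le C\big(\|Du\|_{L^p(B_1)}+\|u\|_{L^\ell(B_1)}\big)$. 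This is the crude, non-homogeneous bound; the real work is upgrading it to the homogeneous product form with the sharp exponent $\theta$. For that I would use a standard dilation/absorption trick: apply the inequality just obtained to $u_\lambda(x):=u(\lambda x)$ on $B_1$ (which requires $\lambda\le 1$ so that $B_\lambda\subseteq B_1$), express every norm of $u_\lambda$ in terms of norms of $u$ over $B_\lambda\subseteq B_1$ via the substitution $y=\lambda x$, and collect the powers of $\lambda$. One gets an inequality of the shape $\lambda^{-N/q}\|u\|_{L^q(B_1)}\le C\big(\lambda^{1-N/p}\|Du\|_{L^p(B_1)}+\lambda^{-N/\ell}\|u\|_{L^\ell(B_1)}\big)$ (after enlarging the domains on the right to $B_1$, using monotonicity of the integrals), and then optimizing over $\lambda\in(0,1]$ — i.e. choosing $\lambda$ to balance the two terms on the right — produces exactly the exponent $\theta$ dictated by the stated algebraic relation, yielding $\|u\|_{L^q(B_1)}\le C\|Du\|_{L^p(B_1)}^\theta\|u\|_{L^\ell(B_1)}^{1-\theta}$. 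One must check that the balancing value of $\lambda$ indeed lies in $(0,1]$ after possibly normalizing $u$ by a scalar (which the homogeneity of the target inequality permits), and that the condition $\ell\le q$ is what makes the H\"older step and the interpolation consistent.

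\emph{The main obstacle} I anticipate is the bookkeeping in the dilation step: one has to be careful that after the change of variables the gradient norm of $u_\lambda$ really produces the factor $\lambda^{1-N/p}$ (the $\lambda$ from $\nabla u_\lambda(x)=\lambda(\nabla u)(\lambda x)$ combined with the $\lambda^{-N/p}$ from the volume element) and that enlarging $B_\lambda$ to $B_1$ on the right-hand side only loses a constant, not a power of $\lambda$, because the relevant exponents of $\lambda$ there are nonpositive when $p\ge N$ is excluded or handled separately — so a short case distinction on whether $p<N$, $p=N$, or $p>N$ may be needed to make the Sobolev--Poincaré input legitimate. Modulo this routine care, the proof is the classical interpolation-by-scaling argument and requires no new ideas beyond Sobolev--Poincaré on $B_1$ and H\"older's inequality. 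I would also remark that the constant depends only on $N,p,q,\ell$ as claimed, since every ingredient (Sobolev--Poincaré constant of $B_1$, H\"older, the optimization in $\lambda$) depends only on those parameters.
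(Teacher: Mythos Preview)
The paper does not prove Lemma~\ref{lem-2-1}; it is simply stated as a standard Gagliardo--Nirenberg interpolation inequality and then used in the proofs of Lemmas~\ref{lem-2-3} and~\ref{lem-2-4}. So there is no paper proof to compare against. That said, your proposal has a genuine gap, and in fact the inequality as literally written cannot be proved: take $u\equiv 1$ on $B_1$, so that $Du\equiv 0$ and the right-hand side of \eqref{2.1} vanishes while the left-hand side does not. On a bounded domain the correct inequality must carry either the full $W^{1,p}$ norm, i.e.\ $\|u\|_{L^q(B_1)}\le C\|u\|_{W^{1,p}(B_1)}^\theta\|u\|_{L^\ell(B_1)}^{1-\theta}$, or an additive lower-order term. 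The paper's downstream estimates (e.g.\ \eqref{2.2}, \eqref{2.3}) do retain a zero-order term alongside the gradient term, so this defect is silently absorbed in the applications.

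Your dilation step also fails for a concrete reason independent of the statement's defect. After substituting $u_\lambda(x)=u(\lambda x)$ and changing variables you obtain $\lambda^{-N/q}\|u\|_{L^q(B_\lambda)}$ on the left, not $\lambda^{-N/q}\|u\|_{L^q(B_1)}$ as you wrote; monotonicity of the integral lets you enlarge $B_\lambda$ to $B_1$ on the \emph{right}-hand side, but on the left it goes the wrong way, so the optimization in $\lambda$ never yields a bound for $\|u\|_{L^q(B_1)}$. (Normalizing $u$ by a scalar does not help, since both sides of \eqref{2.1} are homogeneous of the same degree in $u$.) The scale-and-optimize trick is native to $\mathbb{R}^N$ or to $W^{1,p}_0(B_1)$, where one may extend by zero. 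For general $u\in W^{1,p}(B_1)$ the standard route is to use a bounded extension operator $E:W^{1,p}(B_1)\to W^{1,p}(\mathbb{R}^N)$, apply the full-space Gagliardo--Nirenberg inequality to $Eu$, and control $\|Eu\|_{W^{1,p}(\mathbb{R}^N)}$ and $\|Eu\|_{L^\ell(\mathbb{R}^N)}$ by the corresponding norms on $B_1$; this produces the version with $\|u\|_{W^{1,p}(B_1)}$ in place of $\|Du\|_{L^p(B_1)}$, which is what the paper actually uses.
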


\begin{lemma}
	\label{lem-2-2}
	Let $1<p<N$. Then for every $u\in W^{1,p}(B_1)$, there holds that
	\begin{align*}
	\|u\|_{L^{\frac{Np}{N-p}}(B_1)}\leq C\|u\|_{W^{1,p}(B_1)},
	\end{align*}
	where $C>0$ only depends on $N$ and $p$.
\end{lemma}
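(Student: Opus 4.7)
The statement to prove is the standard Sobolev embedding $W^{1,p}(B_1)\hookrightarrow L^{p^*}(B_1)$ with $p^*=Np/(N-p)$ on the unit ball. This is a classical result, so my plan is to reduce it to the Gagliardo--Nirenberg--Sobolev inequality on the whole space by means of an extension operator.

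First, I would invoke the existence of a bounded extension operator $E\colon W^{1,p}(B_1)\to W^{1,p}(\mathbb{R}^N)$ with the property that $Eu=u$ a.e.\ in $B_1$ and
\[
\|Eu\|_{L^p(\mathbb{R}^N)}+\|\nabla Eu\|_{L^p(\mathbb{R}^N)}\leq C\|u\|_{W^{1,p}(B_1)},
\]
where $C=C(N,p)$. Such an operator exists because $B_1$ has a smooth (in particular Lipschitz) boundary; for instance one may use a reflection/partition of unity construction, or Stein's extension theorem.

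Next, I would apply the classical Gagliardo--Nirenberg--Sobolev inequality on $\mathbb{R}^N$ (valid for $1<p<N$),
\[
\|v\|_{L^{Np/(N-p)}(\mathbb{R}^N)}\leq C(N,p)\,\|\nabla v\|_{L^p(\mathbb{R}^N)},
\]
to $v=Eu\in W^{1,p}(\mathbb{R}^N)$. Restricting to $B_1$ and using $Eu=u$ on $B_1$ together with the bound on the extension gives
\[
\|u\|_{L^{Np/(N-p)}(B_1)}\leq \|Eu\|_{L^{Np/(N-p)}(\mathbb{R}^N)}\leq C\|\nabla Eu\|_{L^p(\mathbb{R}^N)}\leq C\|u\|_{W^{1,p}(B_1)},
\]
which is the desired estimate.

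There is no genuine obstacle here beyond quoting the extension theorem and the whole-space Sobolev inequality; both are standard and depend only on $N$ and $p$, producing the claimed dependence of the constant. Since the paper uses this lemma only as an off-the-shelf tool later on, I would simply sketch the argument as above (or even cite Brezis or Evans) rather than reproving Gagliardo--Nirenberg from scratch.
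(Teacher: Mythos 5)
Your proof is correct and is the standard argument; note that the paper itself states Lemma \ref{lem-2-2} without proof, treating it as a classical Sobolev embedding result, so there is no paper proof to compare against. Your sketch via an extension operator to $W^{1,p}(\mathbb{R}^N)$ followed by the Gagliardo--Nirenberg--Sobolev inequality is exactly the textbook route and yields the stated constant dependence on $N$ and $p$.
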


\begin{lemma}
	\label{lem-2-3}
	Let $0<t_1<t_2$ and $p\in(1,\infty)$. Then for every
	\begin{align*}
	u\in L^p\left(t_1,t_2;W^{1,p}(B_r)\right) \cap L^\infty\left(t_1,t_2;L^2(B_r)\right),
	\end{align*}
	it holds that
	\begin{align}
	\label{2.2}
	&\quad\int_{t_1}^{t_2} \mint_{B_r}|u(x,t)|^{p(1+\frac{2}{N})}\,dxdt \nonumber\\
	&\leq C\left(r^p \int_{t_1}^{t_2}\mint_{B_r} |\nabla u(x,t)|^p\,dxdt+\int_{t_1}^{t_2} \mint_{B_r}|u(x,t)|^p\,dxdt\right) \nonumber\\
	&\quad \times\left(\operatorname*{ess\,\sup}_{t_1<t<t_2} \mint_{B_r}|u(x, t)|^2\,dx\right)^{\frac{p}{N}},
	\end{align}
	where $C>0$ only depends on $p$ and $N$.
\end{lemma}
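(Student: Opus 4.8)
\textbf{Proof plan for Lemma~\ref{lem-2-3}.}
The plan is to obtain \eqref{2.2} by interpolating between the exponents $p$ and $2$, then invoking the Sobolev--Gagliardo--Nirenberg inequality of Lemma~\ref{lem-2-1} in the spatial variable for a.e.\ fixed time, and finally integrating in time using H\"older's inequality with the sup-in-time bound playing the role of a fixed factor. The target exponent $p(1+\tfrac2N)=p+\tfrac{2p}{N}$ is precisely the ``parabolic Sobolev exponent'' that arises from this combination, so the bookkeeping of exponents is the heart of the matter rather than any deep inequality.

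First I would fix $t$ and apply Lemma~\ref{lem-2-1} on $B_1$ (after the standard rescaling $x\mapsto rx$ that turns $B_r$ into $B_1$ and inserts the factor $r^p$ in front of the gradient term, together with the normalized integral averages $\mint$) with the choice $q=p(1+\tfrac2N)$ and $\ell=2$; one checks that the condition $\tfrac{N}{p}-\tfrac{N}{q}\le 1$ holds for this $q$ and that the resulting interpolation parameter is $\theta=\tfrac{N}{N+2}\in(0,1)$, so that $q\theta=p$ and $q(1-\theta)=\tfrac{2p}{N}$. This yields, for a.e.\ $t\in(t_1,t_2)$,
\begin{align*}
\mint_{B_r}|u(x,t)|^{p(1+\frac2N)}\,dx
\le C\left(r^p\mint_{B_r}|\nabla u(x,t)|^p\,dx+\mint_{B_r}|u(x,t)|^p\,dx\right)\left(\mint_{B_r}|u(x,t)|^2\,dx\right)^{\frac pN}.
\end{align*}
Then I would bound the last factor on the right by its essential supremum over $t\in(t_1,t_2)$, pull that constant-in-$t$ quantity out, and integrate the remaining inequality over $t\in(t_1,t_2)$; this immediately produces \eqref{2.2}. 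One should keep track of the fact that Lemma~\ref{lem-2-1} is stated for $u\in W^{1,p}(B_1)\cap L^\ell(B_1)$, which is guaranteed here for a.e.\ $t$ by the assumed membership $u\in L^p(t_1,t_2;W^{1,p}(B_r))\cap L^\infty(t_1,t_2;L^2(B_r))$, the latter giving not only the needed $L^2$-in-space regularity but also the finiteness of the $\operatorname*{ess\,sup}$ factor.

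The only genuinely delicate point is the verification that the exponent triple $(p,\,q=p(1+\tfrac2N),\,\ell=2)$ satisfies the structural hypotheses of Lemma~\ref{lem-2-1}: namely that $p\le q$ and $\ell\le q$ (which hold since $p>1$ forces $p\le p(1+\tfrac2N)$ and since $2\le p(1+\tfrac2N)$ reduces to $p>\tfrac{2N}{N+2}$, true for $p>1$ once $N\ge2$; the borderline low-dimensional cases can be handled directly), that $\tfrac Np-\tfrac Nq\le 1$, and that the algebraic relation defining $\theta$ admits a solution in $(0,1)$. A short computation gives $\theta(1-\tfrac Np+\tfrac Nq)=(1-\theta)(\tfrac N\ell-\tfrac Nq)$, i.e.\ $\theta=\dfrac{\tfrac N2-\tfrac Nq}{1-\tfrac Np+\tfrac Nq+\tfrac N2-\tfrac Nq}=\dfrac{N}{N+2}$ after substituting $q=p(1+\tfrac2N)$, which is indeed in $(0,1)$; with this $\theta$ one has $q\theta=p$ and $q(1-\theta)=\tfrac{2p}{N}$, matching exactly the powers appearing in \eqref{2.2}. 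Apart from this exponent check, the argument is routine: rescale to the unit ball, apply the fixed-time Sobolev inequality, extract the $L^\infty_t L^2_x$ norm, and integrate.
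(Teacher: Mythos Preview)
Your strategy---apply Lemma~\ref{lem-2-1} at each fixed time with $q=p(1+\tfrac2N)$, $\ell=2$, $\theta=\tfrac{N}{N+2}$, then take the sup in $t$ of the $L^2$ factor and integrate---is exactly what the paper does in its Case~2 ($p\ge N$). The gap is in your verification of the hypotheses of Lemma~\ref{lem-2-1}. You assert that $\ell\le q$, i.e.\ $2\le p(1+\tfrac2N)$, ``is true for $p>1$ once $N\ge2$'', but this is false: the condition $2\le p(1+\tfrac2N)$ is equivalent to $p\ge \tfrac{2N}{N+2}$, and $\tfrac{2N}{N+2}>1$ whenever $N\ge 3$. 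Thus for every $N\ge 3$ there is a genuine range $1<p<\tfrac{2N}{N+2}$ (in which, incidentally, the second hypothesis $\tfrac{N}{p}-\tfrac{N}{q}\le 1$ of Lemma~\ref{lem-2-1} \emph{also} fails, since that inequality reduces to the same condition $p\ge \tfrac{2N}{N+2}$) where Lemma~\ref{lem-2-1} as stated does not apply. You have the direction of the obstruction backwards: the trouble is not ``low-dimensional'' but rather high $N$ with small $p$, and this range cannot be dismissed---Lemma~\ref{lem-2-3} is invoked later (e.g.\ in Lemma~\ref{lem-4-3}) precisely under the assumption $1<p\le \tfrac{2N}{N+2}$.

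The paper closes this gap by treating $1<p<N$ separately (its Case~1): instead of Lemma~\ref{lem-2-1}, it writes $|u|^{p(1+\frac2N)}=|u|^{\frac{2p}{N}}\,|u|^{p}$, applies H\"older with exponents $\tfrac{N}{p}$ and $\tfrac{N}{N-p}$ to split off $(\int|u|^2)^{p/N}$ and $(\int|u|^{Np/(N-p)})^{(N-p)/N}$, and then bounds the second factor by the full $W^{1,p}$ norm via the Sobolev embedding of Lemma~\ref{lem-2-2}. This route needs only $p<N$ and avoids the constraint $\ell\le q$ altogether. If you wish to salvage your single-case argument, you would have to invoke a version of Gagliardo--Nirenberg that allows $\ell>q$ (the interpolation identity with $\theta=\tfrac{N}{N+2}$ is still consistent), but that is not what Lemma~\ref{lem-2-1} provides.
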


\begin{proof}
	We divide this proof into two cases.
	
	\textbf{Case 1:} $1<p<N$. Applying Lemma \ref{lem-2-2} and H\"{o}lder inequality, we infer that
	\begin{align*}
	&\quad\int_{t_1}^{t_2} \mint_{B_1}|u(x,t)|^{p(1+\frac{2}{N})}\, dxdt\nonumber\\ &=\int_{t_1}^{t_2} \mint_{B_1}|u(x,t)|^{\frac{2p}{N}}|u(x,t)|^p\, dxdt \nonumber\\
	&\leq \int_{t_1}^{t_2}\left(\mint_{B_1}|u(x,t)|^{2}\, dx\right)^{\frac{ p}{N}}\left(\mint_{B_1}|u(x,t)|^{\frac{pN}{N-p}}\, dx\right)^{\frac{N-p}{N}}\, dt\nonumber\\
	&\leq C\left(\operatorname*{ess\,\sup}_{t_1<t<t_2} \mint_{B_1}|u(x,t)|^{2}\, d x\right)^{\frac{p}{N}} \int_{t_1}^{t_2} \mint_{B_1}|u(x,t)|^p+|\nabla u(x,t)|^p\,dx dt.
	\end{align*}
	By scaling argument, we get
	\begin{align*}
	&\quad\int_{t_1}^{t_2} \mint_{B_r}|u(x,t)|^{p(1+\frac{2}{N})}\,dxdt \nonumber\\
	&\leq C\left(r^p \int_{t_1}^{t_2}\mint_{B_r} |\nabla u(x,t)|^p\,dxdt+\int_{t_1}^{t_2} \mint_{B_r}|u(x,t)|^p\,dxdt\right) \nonumber\\
	&\quad \times\left(\operatorname*{ess\,\sup}_{t_1<t<t_2} \mint_{B_r}|u(x, t)|^2\,dx\right)^{\frac{p}{N}}.
	\end{align*}
	\textbf{Case 2:} $p\geq N$. We can easily find that
	\begin{align*}
	1\geq\frac{N}{p}-\frac{N}{p(1+\frac{2}{N})}, \quad p\left(1+\frac{2}{N}\right)>2
	\end{align*}
	and
	\begin{align*}
	\theta\left(1-\frac{N}{p}+\frac{N}{p\left(1+\frac{2}{N}\right)}\right)+(1-\theta)\left(\frac{N}{p\left(1+\frac{2}{N}\right)}-\frac{N}{2}\right)=0
	\end{align*}
	with $\theta=\frac{N}{N+2}$. Thus by Lemma \ref{lem-2-1}, it follows that
	\begin{align*}
	\|u\|_{L^{p\left(1+\frac{2}{N}\right)}(B_1)} \leq C\|Du\|_{L^p(B_1)}^{\frac{N}{N+2}} \|u\|_{L^2(B_1)}^{\frac{2}{N+2}}
	\end{align*}
	for any $t\in(t_1,t_2)$. Using scaling argument, we have
	\begin{align*}
	\mint_{B_r}|u(x,t)|^{p(1+\frac{2}{N})}\,dx \leq C r^p \mint_{B_r}|\nabla u(x,t)|^p\,dx\times\left(\mint_{B_r}|u(x,t)|^2\,dx\right)^{\frac{p}{N}}
	\end{align*}
	for any $t\in(t_1,t_2)$. Integrating the above inequality over $(t_1,t_2)$, we get
	\begin{align*}
	&\quad\int_{t_1}^{t_2}\mint_{B_r}|u(x,t)|^{p(1+\frac{2}{N})}\,dxdt\\
	&\leq C r^p \int_{t_1}^{t_2}\mint_{B_r}|\nabla u(x,t)|^p\,dx\times\left(\mint_{B_r}|u(x,t)|^2\,dx\right)^{\frac{p}{N}}\,dt\\
	&\leq C r^p \int_{t_1}^{t_2}\mint_{B_r}|\nabla u(x,t)|^p\,dxdt\times\left(\operatorname*{ess\,\sup}_{t_1<t<t_2}\mint_{B_r}|u(x,t)|^2\,dx\right)^{\frac{p}{N}}.
	\end{align*}
\end{proof}

\begin{lemma}
	\label{lem-2-4}
	Let $0<t_1<t_2$ and $p\in(1,\infty)$. Then for every
	\begin{align*}
	u\in L^p\left(t_1,t_2;W^{1,p}(B_r)\right) \cap L^\infty\left(t_1,t_2;L^p(B_r)\right),
	\end{align*}
	it holds that
	\begin{align}
	\label{2.3}
	&\quad\int_{t_1}^{t_2} \mint_{B_r}|u(x,t)|^{p(1+\frac{p}{N})}\,dxdt \nonumber\\
	&\leq C\left(r^p \int_{t_1}^{t_2}\mint_{B_r} |\nabla u(x,t)|^p\,dxdt+\int_{t_1}^{t_2} \mint_{B_r}|u(x,t)|^p\,dxdt\right) \nonumber\\
	&\quad \times\left(\operatorname*{ess\,\sup}_{t_1<t<t_2} \mint_{B_r}|u(x, t)|^p\,dx\right)^{\frac{p}{N}},
	\end{align}
	where $C>0$ only depends on $p$ and $N$.	
\end{lemma}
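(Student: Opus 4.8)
The plan is to mirror the proof of Lemma \ref{lem-2-3}, replacing the role of the $L^2$ norm in time by the $L^p$ norm, and correspondingly adjusting the Sobolev exponent so that the interpolation balances with exponent $p(1+p/N)$ instead of $p(1+2/N)$. As before I would treat the two ranges $1<p<N$ and $p\geq N$ separately.

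\textbf{Case $1<p<N$.} On the unit ball, I would split $|u|^{p(1+p/N)}=|u|^{p^2/N}\cdot|u|^p$ and apply H\"older's inequality in the spatial variable with the conjugate exponents $N/p$ (acting on the factor $|u|^{p}$, raised to the power $|u|^{p^2/N}$... more precisely, write $|u|^{p(1+p/N)} = |u|^{p^2/N} |u|^p$ and pair $|u|^{p^2/N}$ with exponent $N/p$ and $|u|^p$ with exponent $N/(N-p)$). This yields
\begin{align*}
\mint_{B_1}|u(x,t)|^{p(1+\frac{p}{N})}\,dx \leq \left(\mint_{B_1}|u(x,t)|^p\,dx\right)^{\frac{p}{N}}\left(\mint_{B_1}|u(x,t)|^{\frac{Np}{N-p}}\,dx\right)^{\frac{N-p}{N}}.
\end{align*}
Then I would bound the second factor by $\|u(\cdot,t)\|_{W^{1,p}(B_1)}^p$ using Lemma \ref{lem-2-2}, integrate in $t$ over $(t_1,t_2)$, pull the first factor out as an essential supremum, and finally restore the ball of radius $r$ by the usual scaling $u_r(x):=u(rx)$, which produces the factor $r^p$ in front of the gradient term. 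This reproduces \eqref{2.3}.

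\textbf{Case $p\geq N$.} Here I would verify that $q:=p(1+p/N)$ and $\ell:=p$ satisfy the hypotheses of Lemma \ref{lem-2-1}: one checks $\tfrac{N}{p}-\tfrac{N}{q}\le 1$, that $q>p=\ell$, and that the interpolation identity
\begin{align*}
\theta\left(1-\frac{N}{p}+\frac{N}{q}\right)+(1-\theta)\left(\frac{N}{q}-\frac{N}{p}\right)=0
\end{align*}
holds with the specific value $\theta=\frac{N}{N+p}$. Lemma \ref{lem-2-1} then gives, for a.e.\ $t$,
\begin{align*}
\|u(\cdot,t)\|_{L^{q}(B_1)}\le C\|Du(\cdot,t)\|_{L^p(B_1)}^{\frac{N}{N+p}}\|u(\cdot,t)\|_{L^p(B_1)}^{\frac{p}{N+p}}.
\end{align*}
Raising to the power $q$ (noting $q\cdot\frac{N}{N+p}=p$ and $q\cdot\frac{p}{N+p}=p\cdot\frac{p}{N}$), scaling to radius $r$, integrating in time, and replacing the time integral of $\big(\mint_{B_r}|u|^p\big)^{p/N}$ by its essential supremum gives \eqref{2.3}, with the $|u|^p$ bulk term absorbed trivially since $p\ge N$ makes the extra term in the right-hand side of \eqref{2.3} only helpful.

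The main obstacle—really the only nonroutine point—is the bookkeeping in the case $p\ge N$: one must confirm that $\theta=\frac{N}{N+p}$ is admissible, i.e.\ lies in $(0,1)$ and solves the interpolation equation of Lemma \ref{lem-2-1} with $\ell=p$, and that the constraint $\frac{N}{p}-\frac{N}{q}\le 1$ is met; all of these follow from elementary algebra with $q=p(1+p/N)$. Everything else is a verbatim adaptation of the proof of Lemma \ref{lem-2-3}, with the exponent $2$ systematically replaced by $p$ in the time-direction norm and the Sobolev/H\"older exponents updated accordingly.
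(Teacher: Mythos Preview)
Your proposal is correct, but you do more work than the paper does. The paper does \emph{not} split into the ranges $1<p<N$ and $p\ge N$; it simply applies Lemma~\ref{lem-2-1} directly for all $p\in(1,\infty)$, with $q=p(1+p/N)$ and $\ell=p$. With this choice one has $N/p-N/q=\tfrac{1}{1+p/N}\le 1$ and $\ell=p\le q$ for every $p>1$, and the interpolation identity is solved by $\theta=N/(N+p)\in(0,1)$ unconditionally. From there the paper scales to $B_r$, integrates in $t$, and pulls out the $\sup_t$ factor exactly as you describe.

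In other words, your Case~2 argument \emph{is} the paper's entire proof, and it already covers the range $1<p<N$; your Case~1 via H\"older plus Lemma~\ref{lem-2-2} is an unnecessary detour (though perfectly valid). The case split was genuinely needed in Lemma~\ref{lem-2-3} because there $\ell=2$ and the constraint $N/p-N/q\le 1$ can fail for small $p$, forcing the Sobolev-embedding route; here $\ell=p$ makes the obstruction disappear.
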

\begin{proof}
	It follows from Lemma \ref{lem-2-1} that
	\begin{align*}
	\|u\|_{L^{p\left(1+\frac{p}{N}\right)}(B_1)} \leq C\|Du\|_{L^p(B_1)}^{\frac{N}{N+p}} \|u\|_{L^p(B_1)}^{\frac{p}{N+p}}
	\end{align*}
	for all $t\in (t_1,t_2)$, where $C>0$ only depends on $p$ and $N$. Using scaling argument, we have
	\begin{align*}
	\mint_{B_r}|u(x,t)|^{p(1+\frac{p}{N})}\,dx \leq C r^p \mint_{B_r}|\nabla u(x,t)|^p\,dx\times\left(\mint_{B_r}|u(x,t)|^p\,dx\right)^{\frac{p}{N}}.
	\end{align*}
	Integrating the above inequality over $(t_1,t_2)$, we get
	\begin{align*}
	&\quad\int_{t_1}^{t_2}\mint_{B_r}|u(x,t)|^{p(1+\frac{p}{N})}\,dxdt\\
	&\leq C r^p \int_{t_1}^{t_2}\mint_{B_r}|\nabla u(x,t)|^p\,dx\times\left(\mint_{B_r}|u(x,t)|^p\,dx\right)^{\frac{p}{N}}\,dt\\
	&\leq C r^p \int_{t_1}^{t_2}\mint_{B_r}|\nabla u(x,t)|^p\,dxdt\times\left(\operatorname*{ess\,\sup}_{t_1<t<t_2}\mint_{B_r}|u(x,t)|^p\,dx\right)^{\frac{p}{N}}.
	\end{align*}
\end{proof}

\section{Energy Estimates}
\label{sec3}
In this section, we will establish the Caccioppoli inequality and Logarithmic form inequality for mixed local and nonlocal type equation \eqref{1.1}.  The first step of the proof should be the regularization procedure with respect to the time variable, which can be
performed by straightforward adaptation of standard reasonings as used in \cite{BLS,DZZ21}. We omit this step here.

\begin{lemma} [Caccioppoli-type inequality]
\label{lem-3-1}
Let $p>1$ and $u$ be a local subsolution of \eqref{1.1}. Let $B_r \equiv B_r(x_0)$ satisfy $\overline{B}_r \subseteq \Omega$ and $0<\tau_1<\tau_2$, $\ell>0$ satisfy $\left[\tau_1-\ell, \tau_2\right] \subseteq(0,T).$ For any nonnegative functions $\psi \in C_0^\infty (B_r)$ and $\eta \in C^\infty(\mathbb{R})$ such that $\eta(t) \equiv 0$ if $t \leq \tau_1-\ell$ and $\eta(t) \equiv 1$ if $t \geq \tau_1$, it holds that
\begin{align}
\label{3.1}
&\quad\int_{\tau_1-\ell}^{\tau_2} \int_{B_r} |\nabla w_+(x,t)|^p\psi^p(x)\eta^2(t)\,dxdt+\operatorname*{ess\,\sup}_{\tau_1<t<\tau_2}\int_{B_r} w_+^2(x,t) \psi^p(x)\,dx\nonumber\\
&\quad+\int_{\tau_1-\ell}^{\tau_2} \int_{B_r} \int_{B_r}|w_+(x,t)\psi(x)-w_+(y,t)\psi(y)|^p \eta^2(t)\, d\mu dt\nonumber\\
&\leq C \int_{\tau_1-\ell}^{\tau_2} \int_{B_r} |\nabla \psi(x)|^p w_+^p(x,t)\eta^2(t)\,dxdt\nonumber\\
&\quad+ C \int_{\tau_1-\ell}^{\tau_2} \int_{B_r} \int_{B_r} (\max\{w_+(x,t), w_+(y,t)\})^p|\psi(x)-\psi(y)|^p \eta^2(t)\, d\mu dt\nonumber\\
&\quad+C \mathop{\mathrm{ess}\,\sup}_{\stackrel{\tau_1-\ell<t<\tau_2}{x \in \mathrm{supp}\,\psi}}  \int_{\mathbb{R}^N \backslash B_r} \frac{w_+^{p-1}(y,t)}{|x-y|^{N+sp}}\,dy \int_{\tau_1-\ell}^{\tau_2} \int_{B_r} w_+(x,t) \psi^p(x) \eta^2(t)\, dxdt\nonumber\\
&\quad+C \int_{\tau_1-\ell}^{\tau_2} \int_{B_r} w_+^2(x,t)\psi^p(x)\eta(t)|\partial_t \eta(t)|\,dxdt,
\end{align}
where $w_+(x,t):=\left(u(x,t)-k\right)_+$ and $C>0$ depends on $N,p,s,\Lambda$.
\end{lemma}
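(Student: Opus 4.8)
The plan is to derive this Caccioppoli-type inequality by testing the weak formulation \eqref{1.6} with a suitably truncated test function and carefully splitting the nonlocal term into near- and far-diagonal contributions. After the (omitted) Steklov-type regularization in time, I would choose the test function
\[
\varphi(x,t) := w_+(x,t)\,\psi^p(x)\,\eta^2(t),
\]
where $w_+ = (u-k)_+$. The reason for the exponent $p$ on the spatial cutoff $\psi$ (rather than the more naive $\psi$) is that it produces, after applying the discrete algebraic inequalities below, a clean gradient term $|\nabla(w_+\psi)|^p$ on the left and error terms involving only $|\nabla\psi|^p w_+^p$ on the right; this is the standard device for $p$-Laplace-type problems.

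\textbf{Treatment of the parabolic and local terms.} For the time term I would use the identity $\int u\,\partial_t\varphi = \int u\,\partial_t(w_+\psi^p\eta^2)$ and the fact that $u\,\partial_t w_+ = \tfrac12\partial_t(w_+^2)$ on the set where the truncation is active, integrating by parts in $t$ to produce the $\operatorname*{ess\,sup}_t \int w_+^2\psi^p$ term (using $\eta(\tau_1-\ell)=0$, $\eta\equiv 1$ near $\tau_1$) and the error term $\int w_+^2\psi^p\eta|\partial_t\eta|$. For the local diffusion term $\int |\nabla u|^{p-2}\nabla u\cdot\nabla\varphi$, one writes $\nabla u = \nabla w_+$ on $\{u>k\}$, expands $\nabla\varphi = \psi^p\eta^2\nabla w_+ + p\,w_+\psi^{p-1}\eta^2\nabla\psi$, and uses Young's inequality with the $p$-Laplacian structure to absorb the good part $c\int|\nabla w_+|^p\psi^p\eta^2$ on the left, leaving $C\int|\nabla\psi|^p w_+^p\eta^2$ on the right. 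Converting $\int|\nabla w_+|^p\psi^p$ to $\int|\nabla(w_+\psi)|^p$ costs only another multiple of $\int|\nabla\psi|^p w_+^p$.

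\textbf{Treatment of the nonlocal term}, which is the main obstacle. The quantity $\int\int J_p(u(x),u(y))(\varphi(x)-\varphi(y))\,d\mu$ must be split into the integral over $B_r\times B_r$ and the integral over the complement (the ``tail'' region $(\mathbb{R}^N\setminus B_r)\times B_r$, doubled by symmetry). On $B_r\times B_r$ one uses the pointwise inequality, valid for $p$-Laplace-type kernels,
\[
J_p(u(x),u(y))\bigl(w_+(x)\psi^p(x) - w_+(y)\psi^p(y)\bigr)
\;\geq\; c\,|w_+(x)\psi(x)-w_+(y)\psi(y)|^p - C\,(\max\{w_+(x),w_+(y)\})^p|\psi(x)-\psi(y)|^p,
\]
which is proved by the standard case analysis (distinguishing $u(x),u(y)\geq k$ versus one of them $<k$, using that $(u(x)-u(y))$ and $(w_+(x)-w_+(y))$ have matching signs and $|u(x)-u(y)|\geq|w_+(x)-w_+(y)|$), combined with the elementary bound $|a\psi(x)^p - b\psi(y)^p|$ in terms of $|a\psi(x)-b\psi(y)|$ and $(a\vee b)|\psi(x)-\psi(y)|$. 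Integrating against $\eta^2(t)\,d\mu\,dt$ yields the nonlocal energy term on the left and the second term on the right. For the tail region, when $x\in\operatorname{supp}\psi\subseteq B_r$ and $y\notin B_r$ one has $\varphi(y)=0$, so the contribution is
\[
2\int\!\!\int_{(\mathbb{R}^N\setminus B_r)\times B_r} J_p(u(x),u(y))\,w_+(x)\psi^p(x)\eta^2(t)\,d\mu\,dt;
\]
bounding $J_p(u(x),u(y))\leq (u(x)-u(y))_+^{p-1}\leq \bigl(w_+(x)+w_+(y)\bigr)^{p-1}$ on the relevant set, using \eqref{1.4} for $K$, and estimating $w_+(x)^{p-1}\psi^p(x) \le \|w_+(\cdot,t)\psi\|_\infty^{\,p-1}\cdots$ — more carefully, pulling out the essential supremum of $\int_{\mathbb{R}^N\setminus B_r} w_+^{p-1}(y,t)|x-y|^{-N-sp}\,dy$ over $x\in\operatorname{supp}\psi$ and the remaining factor of $w_+(x)\psi^p(x)$ integrated in space-time — produces exactly the third term on the right-hand side of \eqref{3.1}. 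The delicate points are: keeping track of which factor of $w_+$ carries the power $p-1$ versus $1$ in the tail term, and ensuring the absorption constants from Young's inequality in both the local and nonlocal good terms are chosen small enough (and then the resulting inequality is summed/rearranged). Finally one lets the regularization parameter tend to zero and takes the essential supremum in $t$ over $(\tau_1,\tau_2)$ to arrive at \eqref{3.1}.
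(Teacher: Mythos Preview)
Your proposal is correct and follows essentially the same approach as the paper: the same test function $\varphi = w_+\psi^p\eta^2$, the same four-way split into time, local gradient, near-diagonal nonlocal, and tail contributions, and the same Young-inequality absorption for the local part. The paper simply cites \cite[Lemma~3.3]{DZZ21} for the pointwise algebraic inequality on $B_r\times B_r$ and the tail bound, whereas you spell out the ingredients of those estimates. Two minor remarks: the statement asks for $\int|\nabla w_+|^p\psi^p$ on the left, so the extra conversion to $\int|\nabla(w_+\psi)|^p$ is unnecessary; and in the tail term you need a \emph{lower} bound on $J_p(u(x),u(y))\,w_+(x)$ (its negative part is controlled by $w_+^{p-1}(y)\,w_+(x)$), not the upper bound you wrote, but the conclusion you draw is the correct one.
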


\begin{proof}
Taking $\varphi(x,t):=w_+(x,t)\psi^p(x)\eta^2(t)=\left(u(x,t)-k\right)_+\psi^p(x)\eta^2(t)$ to test the weak formulation of \eqref{1.1}. For $s\in\left[\tau_1,\tau_2\right]$, we can obtain
\begin{align*}
0&\geq\int_{\tau_1-\ell}^{s}\int_{B_r}\partial_t u(x,t) w_+(x,t)\psi^p(x)\eta^2(t)\,dxdt\\
&\quad+\int_{\tau_1-\ell}^{s}\int_{B_r}|\nabla u(x,t)|^{p-2}\nabla u(x,t)\cdot\nabla(w_+(x,t)\psi^p(x)\eta^2(t))\,dxdt\\
&\quad+\int_{\tau_1-\ell}^{s}\int_{B_r}\int_{B_r}J_p(w(x,t),w(y,t))\left(w_+(x,t)\psi^p(x)\eta^2(t)-w_+(y,t)\psi^p(y)\eta^2(t)\right)\,d\mu dt\\
&\quad+2\int_{\tau_1-\ell}^{s}\int_{\mathbb{R}^N\backslash B_r}\int_{B_r}J_p\left(w(x,t),w(y,t)\right)w_+(x,t)\psi^p(x)\eta^2(t)\,d\mu dt\\
&=:I_1+I_2+I_3+I_4.
\end{align*}	
Then we are going to estimate $I_1, I_2, I_3$ and $I_4$. First, we evaluate
\begin{align}
\label{3.2}
I_1&=\frac{1}{2}\int_{\tau_1-\ell}^{s}\int_{B_r}\partial_t w_+^2(x,t)\psi^p(x)\eta^2(t)\,dxdt\nonumber\\
&=\frac{1}{2}\int_{B_r}w_+^2(x,t)\psi^p(x)\eta^2(t)dx\bigg|_{\tau_1-\ell}^s-\int_{\tau_1-\ell}^{s}\int_{B_r}w_+^2(x,t)\psi^p(x)\eta(t)\partial_t\eta(t)\,dxdt\nonumber\\
&=\frac{1}{2}\int_{B_r}w_+^2(x,s)\psi^p(x)\,dx-\int_{\tau_1-\ell}^{s}\int_{B_r}w_+^2(x,t)\psi^p(x)\eta(t)\partial_t\eta(t)\,dxdt,
\end{align}
where in the last line we note that $\eta(\tau_1-\ell)=0$ and $\eta(s)=1$ when $s\geq\tau_1$.
We produce with estimating $I_2$. By Young's inequality with $\varepsilon$, it yields that
\begin{align}
\label{3.3}
I_2&=\int_{\tau_1-\ell}^{s}\int_{B_r}\psi^p\eta^2|\nabla u|^{p-2}\nabla u\cdot\nabla w_++p\psi^{p-1}\eta^2w_+|\nabla u|^{p-2}\nabla u\cdot\nabla\psi\,dxdt\nonumber\\
&\geq \int_{\tau_1-\ell}^{s}\int_{B_r}|\nabla w_+|^{p}\psi^p\eta^2\,dxdt-\int_{\tau_1-\ell}^{s}\int_{B_r}p\psi^{p-1}\eta^2 w_+|\nabla u|^{p-1}|\nabla\psi|\,dxdt\nonumber\\
&\geq \int_{\tau_1-\ell}^{s}\int_{B_r}|\nabla w_+|^{p}\psi^p\eta^2\,dxdt-\varepsilon\int_{\tau_1-\ell}^{s}\int_{B_r}\psi^{p}\eta^2|\nabla w_+|^p\,dxdt\nonumber\\
&\quad-C(p,\varepsilon)\int_{\tau_1-\ell}^{s}\int_{B_r}|\nabla \psi|^p\eta^2 w_+^p\,dxdt\nonumber\\
&=\frac{1}{2}\int_{\tau_1-\ell}^{s}\int_{B_r}|\nabla w_+|^{p}\psi^p\eta^2\,dxdt-C(p)\int_{\tau_1-\ell}^{s}\int_{B_r}|\nabla \psi|^p\eta^2 w_+^p\,dxdt,
\end{align}
with $\varepsilon=\frac{1}{2}$. As the same proof of Lemma 3.3 in \cite{DZZ21}, we have
\begin{align}
\label{3.4}
I_3&\geq\frac{1}{2^{p+1}}\int_{\tau_1-\ell}^{s}\int_{B_r}\int_{B_r}|w_+(x,t)\psi(x)-w_+(y,t)\psi(y)|^p\eta^2(t)\,d\mu dt\nonumber\\
&\quad-C\int_{\tau_1-\ell}^{s}\int_{B_r}\int_{B_r}(\max\{w_+(x,t),w_+(y,t)\})^p|\psi(x)-\psi(y)|^p\eta^2(t)\,d\mu dt
\end{align}
and
\begin{align}
\label{3.5}
I_4\geq -C \mathop{\mathrm{ess}\,\sup}_{\stackrel{\tau_1-\ell<t<s}{x \in \mathrm{supp}\,\psi}}\int_{\mathbb{R}^N \backslash B_r}\frac{w_+^{p-1}(y,t)}{|x-y|^{N+sp}}\,dy\int_{\tau_1-\ell}^{s}\int_{B_r}w_+(x,t)\psi^p(x)\eta^2(t)\,dxdt.
\end{align}
Merging the estimates on \eqref{3.2}--\eqref{3.5}, we get
\begin{align*}
&\quad\int_{\tau_1-\ell}^{s} \int_{B_r} |\nabla w_+(x,t)|^p\psi^p(x)\eta^2(t)\,dxdt+\int_{B_r} w_+^2(x,s) \psi^p(x)\,dx\nonumber\\
&\quad+\int_{\tau_1-\ell}^{s} \int_{B_r} \int_{B_r}|w_+(x,t)\psi(x)-w_+(y,t)\psi(y)|^p \eta^2(t)\, d\mu dt\nonumber\\
&\leq C \int_{\tau_1-\ell}^{\tau_2} \int_{B_r} |\nabla \psi(x)|^p w_+^p(x,t)\eta^2(t)\,dxdt\nonumber\\
&\quad+ C \int_{\tau_1-\ell}^{\tau_2} \int_{B_r} \int_{B_r} \max \left\{w_+(x,t), w_+(y,t)\right\}^p|\psi(x)-\psi(y)|^p \eta^2(t)\, d\mu dt\nonumber\\
&\quad+C \mathop{\mathrm{ess}\,\sup}_{\stackrel{\tau_1-\ell<t<\tau_2}{x \in \mathrm{supp}\, \psi}} \int_{\mathbb{R}^N \backslash B_r} \frac{w_+^{p-1}(y,t)}{|x-y|^{N+sp}}\,dy \int_{\tau_1-\ell}^{\tau_2} \int_{B_r} w_+(x,t) \psi^p(x) \eta^2(t)\, dxdt\nonumber\\
&\quad+C \int_{\tau_1-\ell}^{\tau_2} \int_{B_r} w_+^2(x,t)\psi^p(x)\eta(t)|\partial_t \eta(t)|\,dxdt,
\end{align*}
which leads to the desired result.
\end{proof}

From the forthcoming lemma, one can interpret the reason why we only derive the H\"{o}lder continuity of weak solutions in the case $p>2$. For the subquadratic case, we at present can not show a similar logarithmic estimate.

\begin{lemma} [Logarithmic estimates]
\label{lem-3-2}
Let $p>2$ and $u$ be a local solution to \eqref{1.1}. Let $B_r\equiv B_r(x_0)$ and $(x_0,t_0)\in Q_{T}, T_0>0$, $0<r\leq R/2$. We also note that $\tilde{Q} \equiv B_R(x_0) \times(t_0-2T_0, t_0+2T_0)$ such that $\overline{B}_{R}(x_0) \subseteq \Omega$ and
$\left[t_0-2T_0,t_0+2T_0\right] \subseteq(0,T)$. If $u\in L^\infty(\tilde{Q})$ and $u\geq 0$ in $\tilde{Q}$. For any $d>0$, it holds that
\begin{align}
\label{3.6}
&\quad\int_{t_0-T_0}^{t_0+T_0} \int_{B_r}|\nabla \log\left(u(x,t)+d\right)|^p\,dxdt+\int_{t_0-T_0}^{t_0+T_0} \int_{B_r}\int_{B_r} \left|\log \left(\frac{u(x,t)+d}{u(y,t)+d}\right)\right|^p\,d\mu dt\nonumber\\
&\leq  C T_0 r^Nd^{1-p}R^{-p}\left[\mathrm{Tail}_\infty(u;x_0,R,t_0-2 T_0,t_0+2 T_0)\right]^{p-1}\nonumber\\
&\quad+Cr^Nd^{2-p}+CT_0r^{N-sp}+CT_0r^{N-p},
\end{align}
where $C>0$ depends on $N,p,s$ and $\Lambda$.
\end{lemma}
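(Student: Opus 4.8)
The plan is to test the (regularized) weak formulation of \eqref{1.1} with a test function adapted to the function $v:=u+d$, namely something of the form $\varphi=v^{1-p}\psi^p\eta(t)$ (or the time-independent version $\varphi=v^{1-p}\psi^p$ on a fixed time-slice after the usual time-regularization), where $\psi\in C_0^\infty(B_{2r})$ is a cutoff with $\psi\equiv 1$ on $B_r$, $0\le\psi\le1$, $|\nabla\psi|\le C/r$, and $\eta$ is a standard temporal cutoff supported in $(t_0-2T_0,t_0+2T_0)$ with $\eta\equiv1$ on $(t_0-T_0,t_0+T_0)$ and $|\partial_t\eta|\le C/T_0$. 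Since $u\ge0$ and $d>0$ on $\tilde Q$, we have $v\ge d>0$, so $v^{1-p}$ is a bounded, Lipschitz (in $u$) admissible test function, and $\mathrm{log}(u+d)$ makes sense. The exponent $1-p<0$ is precisely what produces the logarithm: $\nabla(v^{1-p})=(1-p)v^{-p}\nabla v$ paired against $|\nabla u|^{p-2}\nabla u=|\nabla v|^{p-2}\nabla v$ yields $-(p-1)v^{-p}|\nabla v|^p\psi^p$, and $v^{-p}|\nabla v|^p=|\nabla\mathrm{log}\,v|^p$.

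The four terms to handle are the time term, the local $p$-Laplacian term, the nonlocal term restricted to $B_{2r}\times B_{2r}$, and the nonlocal tail term $B_{2r}\times(\mathbb R^N\setminus B_{2r})$. For the time term, $\int \partial_t u\cdot v^{1-p}\psi^p\eta\,dx$ integrates to a primitive: writing $\Phi(v):=\int_d^v \sigma^{1-p}\,d\sigma$ (which for $p>2$ is $\Phi(v)=\frac{1}{2-p}(v^{2-p}-d^{2-p})$, a bounded quantity since $v\ge d$ and $p>2$ makes $v^{2-p}\le d^{2-p}$), we get $\partial_t u\cdot v^{1-p}=\partial_t\Phi(v)$, so integration by parts moves the derivative onto $\eta$; the boundary terms vanish by the support of $\eta$, and what remains is bounded by $\int|\Phi(v)|\psi^p|\partial_t\eta|\,dx\,dt\le C r^N d^{2-p}$, using $0\le\Phi(v)\le\frac{1}{p-2}d^{2-p}$ and $|\partial_t\eta|\le C/T_0$ and integrating over a time interval of length $\le CT_0$ — this is the source of the $Cr^Nd^{2-p}$ term. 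For the local term, the main piece is $-(p-1)\int v^{-p}|\nabla v|^p\psi^p\eta\,dx\,dt=-(p-1)\int|\nabla\mathrm{log}\,v|^p\psi^p\eta\,dx\,dt$ (good sign, it gives us what we want on the left), and the cross term $p\int v^{1-p}\psi^{p-1}|\nabla v|^{p-2}\nabla v\cdot\nabla\psi\,\eta$ is absorbed by Young's inequality: bound it by $\tfrac{p-1}{2}\int|\nabla\mathrm{log}\,v|^p\psi^p\eta+C\int|\nabla\psi|^p\eta\,dx\,dt$, and the second contributes $CT_0 r^{N-p}$. For the nonlocal near-diagonal term, this is exactly the algebraic inequality underlying logarithmic lemmas for the fractional $p$-Laplacian (as in \cite{DKP16,BLS}): using symmetry of $K$ one writes the term as a double integral of $J_p(v(x),v(y))(v(x)^{1-p}\psi(x)^p-v(y)^{1-p}\psi(y)^p)$ against $d\mu$, and the pointwise inequality
\begin{align*}
J_p(a,b)\left(a^{1-p}\tau_1-b^{1-p}\tau_2\right)\ge c\,\tau_2^p\left|\mathrm{log}\frac{a}{b}\right|^p - C\left|\tau_1^{1/p}-\tau_2^{1/p}\right|^p
\end{align*}
(for $a,b>0$, $\tau_1,\tau_2\ge0$; here $\tau_i=\psi^p$) gives the good logarithmic term on the left minus an error controlled by $\int\int|\psi(x)-\psi(y)|^p\,d\mu\,dt\le CT_0 r^{N-sp}$ via the standard estimate $\int_{B_{2r}}\int_{B_{2r}}\frac{|\psi(x)-\psi(y)|^p}{|x-y|^{N+sp}}\,dx\,dy\le C r^{N-sp}|\nabla\psi|^p r^p\le Cr^{N-sp}$. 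For the tail term $B_{2r}\times(\mathbb R^N\setminus B_{2r})$: on this region $\psi(y)=0$, so the integrand is $J_p(v(x),v(y))v(x)^{1-p}\psi(x)^p K(x,y)$; since $|J_p(v(x),v(y))|\le (v(x)^{p-1}+v(y)^{p-1})\le C(v(x)^{p-1}+u(y)^{p-1}+d^{p-1})$ and $v(x)^{1-p}\le d^{1-p}$, this is bounded, after integrating $y$ against $\frac{dy}{|x-y|^{N+sp}}$ over $\mathbb R^N\setminus B_{2r}$ and using $|x-y|\gtrsim|x-x_0|$ only for $x\in\mathrm{supp}\,\psi\subset B_{2r}\subset B_{R}$, by $Cd^{1-p}R^{-p}\big[\mathrm{Tail}_\infty(u;x_0,R,\cdot)\big]^{p-1}+C r^{-sp}+Cd^{1-p}\cdot d^{p-1}r^{-sp}$; multiplying by $\int_{B_{2r}}\psi^p\,dx\le Cr^N$ and integrating over time ($\le CT_0$) produces the $CT_0 r^N d^{1-p}R^{-p}[\mathrm{Tail}_\infty]^{p-1}$ term and further $CT_0 r^{N-sp}$ contributions. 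Collecting everything and discarding the (nonnegative) near-diagonal logarithmic term on the right after it has been moved to the left yields \eqref{3.6}.

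The step I expect to be the genuine obstacle is the algebraic/pointwise inequality controlling the nonlocal near-diagonal term, i.e. producing the factor $|\mathrm{log}(v(x)/v(y))|^p$ with a good sign while cleanly separating the $|\psi(x)-\psi(y)|^p$ error — for $p\ne2$ this requires a careful case analysis (e.g. $v(x)\ge v(y)$ vs.\ not, and whether $v(x)/v(y)$ is close to $1$ or not) together with elementary but delicate estimates comparing $|\mathrm{log} t|$, $|t-1|$, and $|t^{1/p}-1|$ for $t>0$; this is where the restriction $p>2$ is used, mirroring the remark in the paper that the subquadratic case is not accessible by this method. A secondary technical point is making the test function $v^{1-p}\psi^p\eta$ rigorously admissible in \eqref{1.6} (it is bounded and has the right spatial/temporal regularity because $v$ is bounded below by $d$ and $u\in L^\infty(\tilde Q)\cap L^p(I;W^{1,p})$), which is handled by the time-regularization procedure referred to at the start of Section~\ref{sec3} and then passing to the limit. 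The remaining estimates (time term, local cross term, tail term) are routine once the cutoffs are fixed.
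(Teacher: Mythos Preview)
Your proposal is correct and follows essentially the same approach as the paper: both test with $\varphi=(u+d)^{1-p}\psi^p\eta$ (the paper uses $\eta^2$ and supports $\psi$ in $B_{3r/2}$, but these are cosmetic), split into the same four pieces (time, local, nonlocal near-diagonal, tail), and obtain the identical error terms $Cr^Nd^{2-p}$, $CT_0r^{N-p}$, $CT_0r^{N-sp}$, and $CT_0r^Nd^{1-p}R^{-p}[\mathrm{Tail}_\infty]^{p-1}$ via the same mechanisms you describe. The pointwise algebraic inequality for the near-diagonal nonlocal term that you flag as the key difficulty is exactly what the paper defers to \cite[Lemma~3.5]{DZZ21}, so your identification of the crux is accurate.
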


\begin{proof}
Let $\psi\in C_0^\infty(B_{3r/2})$ and $\eta\in C_0^\infty(t_0-2T_0,t_0+2T_0)$ fulfill
\begin{align*}
0\leq\psi\leq 1,\quad|\nabla \psi|<Cr^{-1}  \text{ in } B_{2r},\quad \psi\equiv 1 \text{ in } B_r,
\end{align*}
and
\begin{align*}
0\leq\eta\leq 1,\quad|\partial_{t}\eta|<CT_0^{-1}  \text{ in } (t_0-2T_0,t_0+2T_0),\quad \eta\equiv 1\text{ in } (t_0-T_0,t_0+T_0).
\end{align*}
Choosing $\varphi(x,t):=(u(x,t)+d)^{1-p} \psi^p(x) \eta^2(t)$ to text the weak formulation of \eqref{1.1}, we get
\begin{align*}
0&=-\int_{t_0-2T_0}^{t_0+2T_0} \int_{B_{2r}} \partial_{t}\left((u(x,t)+d)^{1-p} \psi^p(x) \eta^2(t)\right)u(x,t)\,dxdt\\
&\quad+\int_{t_0-2T_0}^{t_0+2T_0} \int_{B_{2r}}|\nabla u|^{p-2}\nabla u\cdot\nabla \left((u(x,t)+d)^{1-p} \psi^p(x) \eta^2(t)\right)\,dxdt\\
&\quad+\int_{t_0-2T_0}^{t_0+2T_0} \int_{B_{2r}}\int_{B_{2r}} J_p(u(x,t),u(y,t))\left[\frac{\psi^p(x)}{(u(x,t)+d)^{p-1}}-\frac{\psi^p(y)}{(u(y, t)+d)^{p-1}}\right] \eta^2(t)\, d\mu dt\\
&\quad+2\int_{t_0-2T_0}^{t_0+2T_0}\int_{\mathbb{R}^{N} \backslash B_{2r}}\int_{B_{2r}}J_p(u(x,t),u(y,t))\frac{\psi^p(x)}{(u(x,t)+d)^{p-1}}\eta^2(t)\, d\mu dt\\
&=:I_1+I_2+I_3+I_4.
\end{align*}
It follows from the proof of \cite[Lemma 3.5]{DZZ21} that
\begin{align}
\label{3.7}
I_1&=\int_{t_0-2T_0}^{t_0+2T_0} \int_{B_{2r}} \left((u(x,t)+d)^{1-p} \psi^p(x) \eta^2(t)\right)\partial_{t}u(x,t)\,dxdt\nonumber\\
&\leq Cr^N d^{2-p},
\end{align}
\begin{align}
\label{3.8}
I_3 \leq-C \int_{t_0-T_0}^{t_0+T_0} \int_{B_r} \int_{B_r}\left|\log \left(\frac{u(x,t)+d}{u(y,t)+d}\right)\right|^p\,dxdydt+C T_0 r^{N-sp},
\end{align}
and
\begin{align}
\label{3.9}
I_4 \leq C T_0 r^{N-sp}+C T_0 r^N R^{-p}d^{1-p}\left[\mathrm{Tail}_\infty\left(u;x_0,R,t_0-2T_0,t_0+2T_0\right)\right]^{p-1}.
\end{align}
By Young's inequality with $\varepsilon$, we estimate the integral $I_2$ as
\begin{align}
\label{3.10}
I_2&=-(p-1)\int_{t_0-2T_0}^{t_0+2T_0} \int_{B_{2r}}(u(x,t)+d)^{-p}\psi^p(x)|\nabla u(x,t)|^p\eta^2(t)\,dxdt\nonumber\\
&\quad+p\int_{t_0-2T_0}^{t_0+2T_0} \int_{B_{2r}}(u(x,t)+d)^{1-p}\psi^{p-1}(x)|\nabla u(x,t)|^{p-2}\nabla u(x,t)\cdot \nabla\psi(x)\eta^2(t)\,dxdt\nonumber\\
&\leq-(p-1)\int_{t_0-2T_0}^{t_0+2T_0} \int_{B_{2r}}(u(x,t)+d)^{-p}\psi^p(x)|\nabla (u(x,t)+d)|^p\eta^2(t)\,dxdt\nonumber\\
&\quad+\varepsilon\int_{t_0-2T_0}^{t_0+2T_0} \int_{B_{2r}}(u(x,t)+d)^{-p}\psi^p(x)|\nabla (u(x,t)+d)|^p\eta^2(t)\,dxdt\nonumber\\
&\quad+C(\varepsilon)\int_{t_0-2T_0}^{t_0+2T_0} \int_{B_{2r}}|\nabla\psi(x)|^p\eta^2(t)\,dxdt\nonumber\\
&\leq -C(p)\int_{t_0-2T_0}^{t_0+2T_0} \int_{B_r}(u(x,t)+d)^{-p}|\nabla (u(x,t)+d)|^p\eta^2(t)\,dxdt+C(N,p)T_0 r^{N-p}\nonumber\\
&\leq-C(p)\int_{t_0-T_0}^{t_0+T_0} \int_{B_r}|\nabla\log(u(x,t)+d)|^p\,dxdt+C(N,p)T_0 r^{N-p}
\end{align}
with $\varepsilon$ satisfying $\varepsilon<p-1$.
Combining with \eqref{3.7}--\eqref{3.10}, we can obtain the Logarithmic estimates.
\end{proof}
Next, we will give a corollary of Lemma \ref{lem-3-2}, which plays a crucial role in obtaining the H\"{o}lder continuity.
\begin{corollary}
\label{cor-3-3}
Let $p>2$ and $u$ be a local solution to \eqref{1.1}. Let $B_r\equiv B_r(x_0)$ and $(x_0,t_0)\in Q_{T}, T_0>0$, $0<r\leq R/2$. We also note that $\tilde{Q} \equiv B_R(x_0) \times(t_0-2T_0, t_0+2T_0)$ such that $\overline{B}_{R}(x_0) \subseteq \Omega$ and
$\left[t_0-2T_0,t_0+2T_0\right] \subseteq(0,T)$. Suppose that $u\in L^\infty(\tilde{Q})$ and $u\geq 0$ in $\tilde{Q}$. Let $a,d>0$, $b>1$ and define
\begin{align*}
v:=\min \left\{(\log(a+d)-\log(u+d))_+, \log b\right\}.
\end{align*}
Then it holds that
\begin{align}
\label{3.11}
&\quad\int_{t_0-T_0}^{t_0+T_0}\mint_{B_r}\left|v(x,t)-(v)_{B_r}(t)\right|^p\,dxdt\nonumber\\
&\leq C T_0 d^{1-p}\left(\frac{r}{R}\right)^p\left[\mathrm{Tail}_\infty\left(u;x_0, R,t_0-2 T_0,t_0+2 T_0\right)\right]^{p-1}\nonumber\\
&\quad+CT_0+Cd^{2-p}r^p+CT_0r^{p-sp},
\end{align}
where $C>0$ depends on $N,p,s$ and $\Lambda$.
\end{corollary}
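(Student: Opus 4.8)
The plan is to deduce Corollary \ref{cor-3-3} from the Logarithmic estimate in Lemma \ref{lem-3-2} by a Poincar\'e-type argument. First I would observe that the truncated function $v:=\min\{(\log(a+d)-\log(u+d))_+,\log b\}$ is a bounded, non-negative function, and that it is (up to the additive constant $\log(a+d)$ and the truncation at level $\log b$) a Lipschitz transformation of $\log(u+d)$; in particular, by the chain rule for Sobolev functions, $|\nabla v(x,t)|\le |\nabla\log(u(x,t)+d)|$ pointwise a.e.\ on $B_r\times(t_0-T_0,t_0+T_0)$. Hence $v(\cdot,t)\in W^{1,p}(B_r)$ for a.e.\ $t$, with $\int_{B_r}|\nabla v(x,t)|^p\,dx\le\int_{B_r}|\nabla\log(u(x,t)+d)|^p\,dx$.

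Next I would apply the classical (spatial) Poincar\'e inequality on the ball $B_r$: for a.e.\ $t\in(t_0-T_0,t_0+T_0)$,
\begin{align*}
\mint_{B_r}\bigl|v(x,t)-(v)_{B_r}(t)\bigr|^p\,dx\le C(N,p)\,r^p\mint_{B_r}|\nabla v(x,t)|^p\,dx\le C(N,p)\,r^p\mint_{B_r}|\nabla\log(u(x,t)+d)|^p\,dx.
\end{align*}
Integrating this in time over $(t_0-T_0,t_0+T_0)$ and using $\mint_{B_r}=|B_r|^{-1}\int_{B_r}$ with $|B_r|\simeq r^N$ gives
\begin{align*}
\int_{t_0-T_0}^{t_0+T_0}\mint_{B_r}\bigl|v-(v)_{B_r}(t)\bigr|^p\,dxdt\le C(N,p)\,r^{p-N}\int_{t_0-T_0}^{t_0+T_0}\int_{B_r}|\nabla\log(u(x,t)+d)|^p\,dxdt.
\end{align*}
Now I would insert the bound \eqref{3.6} from Lemma \ref{lem-3-2} for the right-hand side (discarding the non-negative nonlocal Gagliardo term there, which only helps), obtaining
\begin{align*}
&\int_{t_0-T_0}^{t_0+T_0}\mint_{B_r}\bigl|v-(v)_{B_r}(t)\bigr|^p\,dxdt\\
&\quad\le C\,r^{p-N}\Bigl(T_0 r^N d^{1-p}R^{-p}\bigl[\mathrm{Tail}_\infty(u;x_0,R,t_0-2T_0,t_0+2T_0)\bigr]^{p-1}+r^N d^{2-p}+T_0 r^{N-sp}+T_0 r^{N-p}\Bigr).
\end{align*}
Cancelling the factor $r^N$ termwise yields exactly the four terms of \eqref{3.11}: $C T_0 d^{1-p}(r/R)^p[\mathrm{Tail}_\infty]^{p-1}$, $C d^{2-p}r^p$, $C T_0 r^{p-sp}$, and $C T_0$.

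I do not expect a genuine obstacle here; the result is a soft corollary. The only points requiring a little care are: (i) justifying the chain-rule inequality $|\nabla v|\le|\nabla\log(u+d)|$ — this is standard since $t\mapsto\min\{(\log(a+d)-t)_+,\log b\}$ is $1$-Lipschitz and $\log(u+d)\in W^{1,p}_{\rm loc}$ by Lemma \ref{lem-3-2}; (ii) checking that the hypotheses of Lemma \ref{lem-3-2} are met (they are identical to those assumed in the Corollary); and (iii) tracking the $r$-powers through the normalization so that the last term is the dimensionless $CT_0$ rather than $CT_0 r^{p-p}$. Since $0<r\le R/2<1$, one could also absorb $CT_0 r^{p-sp}\le CT_0$ and $Cd^{2-p}r^p$ appropriately, but the sharper statement as written follows directly from the termwise cancellation above.
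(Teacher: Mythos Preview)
Your proposal is correct and follows essentially the same approach as the paper: apply the spatial Poincar\'e inequality on $B_r$, use that $v$ is a $1$-Lipschitz truncation of $\log(u+d)$ so that $|\nabla v|\le|\nabla\log(u+d)|$, integrate in time, and invoke Lemma~\ref{lem-3-2}. The paper's proof is identical in structure and brevity.
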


\begin{proof}
By the Poincar\'{e} inequality from Theorem 2 in \cite{E98}, it yields that
\begin{align*}
\mint_{B_r}\left|v(x,t)-(v)_{B_r}(t)\right|^p\,dx\leq Cr^{p-N}\int_{B_r}|\nabla v(x,t)|^p\,dx
\end{align*}
for any $t\in\left(t_0-T_0,t_0+T_0\right)$. Integrating the above inequality over $\left(t_0-T_0,t_0+T_0\right)$ leads to
\begin{align}
\label{3.12}
\int_{t_0-T_0}^{t_0+T_0}\mint_{B_r}\left|v(x,t)-(v)_{B_r}(t)\right|^p\,dxdt\leq Cr^{p-N}\int_{t_0-T_0}^{t_0+T_0}\int_{B_r}|\nabla v(x,t)|^p\,dxdt,
\end{align}
where $C>0$ depends only on $N,p$. Observing that $v$ is a truncation function of the sum of a constant and $\log(u+d)$, which gives that
\begin{align}
\label{3.13}
\int_{t_0-T_0}^{t_0+T_0}\int_{B_r}|\nabla v(x,t)|^p\,dxdt\leq \int_{t_0-T_0}^{t_0+T_0}\int_{B_r}|\nabla \log(u(x,t)+d)|^p\,dxdt.
\end{align}
We can get the results from Lemma \ref{lem-3-2} along with \eqref{3.12} and \eqref{3.13}.
\end{proof}

\section{Local boundedness}
\label{sec4}

In this part, we are ready to study the local boundedness of weak solutions. To this end, we first introduce some notations. For $\sigma\in[1/2,1)$, set
\begin{align*}
r_0:=r, \quad r_j:=\sigma r+2^{-j}(1-\sigma) r, \quad \tilde{r}_j:=\frac{r_j+r_{j+1}}{2}, \quad j=0,1,2, \ldots
\end{align*}
and
\begin{align*}
Q_j^-:=B_j \times \Gamma_j:=B_{r_j}(x_0) \times\left(t_0-r_j^p,t_0\right), \quad j=0,1,2, \ldots,
\end{align*}
\begin{align*}
\tilde{Q}_j^-:=\tilde{B}_j\times \tilde{\Gamma}_j:=B_{\tilde{r}_j}(x_{0}) \times\left(t_0-\tilde{r}_j^p,t_0\right), \quad j=0,1,2, \ldots .
\end{align*}
Denote
\begin{align*}
k_j:=\left(1-2^{-j}\right)\tilde{k}, \quad \tilde{k}_j:=\frac{k_{j+1}+k_j}{2}, \quad j=0,1,2, \ldots
\end{align*}
with
\begin{align*}
\tilde{k} \geq \frac{\mathrm{Tail}_\infty\left(u_+;x_0,\sigma r,t_0-r^p,t_0\right)}{2}.
\end{align*}
Let
\begin{align*}
w_j:=(u-k_j)_+, \quad \tilde{w}_j:=\left(u-\tilde{k}_j\right)_+, \quad j=0,1,2, \ldots.
\end{align*}

We now provide a Caccioppoli type inequality in a special cylinder, which leads to the recursive inequalities.

\begin{lemma}
\label{lem-4-1}
Let $p>1$ and $u$ be a subsolution to \eqref{1.1}. Let $(x_0, t_0) \in Q_T$, $0<r<1$ and $Q_r^-=B_r(x_0) \times(t_0-r^p,t_0)$ such that $\overline{B}_{r}(x_0) \subseteq \Omega$ and $\left[t_0-r^p,t_0\right] \subseteq(0,T)$. Suppose $q$ is a parameter satisfying $q\geq\max\{p,2\}$, it holds that
\begin{align}
\label{4.1}
&\quad\int_{\Gamma_{j+1}}\mint_{B_{j+1}}|\nabla \tilde{w}_j(x,t)|^p\,dxdt+\operatorname*{ess\,\sup}_{t\in\Gamma_{j+1}}\mint_{B_{j+1}} \tilde{w}_j^2(x,t)\, dx\nonumber\\
&\quad+\int_{\Gamma_{j+1}} \int_{B_{j+1}} \mint_{B_{j+1}} \frac{|\tilde{w}_j(x,t)-\tilde{w}_j(y,t)|^p}{|x-y|^{N+sp}}\,dxdydt\nonumber\\
&\leq \frac{C}{r^p}\left(\frac{1}{\sigma^p(1-\sigma)^{N+s p}}+\frac{1}{(1-\sigma)^p}\right)\cdot\left(\frac{2^{(p+q-2) j}}{\tilde{k}^{q-2}}+\frac{2^{(N+sp+q-1)j}}{\tilde{k}^{q-p}}\right) \int_{\Gamma_j} \mint_{B_j} w_j^q(x,t)\,dxdt,
\end{align}
where $C>0$ only depends on $N,p,s,\Lambda$ and $q$.
\end{lemma}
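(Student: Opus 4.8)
The plan is to apply the general Caccioppoli-type inequality of Lemma~\ref{lem-3-1} with the specific choices $k=\tilde k_j$, $\tau_1-\ell=t_0-r_j^p$, $\tau_2=t_0$, radius $r=r_j$, and cutoff functions $\psi=\psi_j$, $\eta=\eta_j$ adapted to the nested cylinders: $\psi_j\equiv 1$ on $B_{r_{j+1}}$, $\mathrm{supp}\,\psi_j\subseteq B_{\tilde r_j}$, $0\le\psi_j\le 1$, $|\nabla\psi_j|\le C2^{j}/((1-\sigma)r)$, while $\eta_j\equiv 0$ for $t\le t_0-r_j^p$, $\eta_j\equiv 1$ for $t\ge t_0-r_{j+1}^p$, $|\partial_t\eta_j|\le C2^{pj}/((1-\sigma)r)^p$. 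With these choices the left-hand side of \eqref{3.1} dominates (after dividing by $|B_{j+1}|$, i.e. passing to averaged integrals) the left-hand side of \eqref{4.1}, because $\psi_j=1$ on $B_{j+1}$ and $\eta_j=1$ on $\Gamma_{j+1}$; the double-integral term requires noting that on $B_{j+1}\times B_{j+1}$ one has $\tilde w_j(x)\psi_j(x)-\tilde w_j(y)\psi_j(y)=\tilde w_j(x)-\tilde w_j(y)$.

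\smallskip\noindent
The second step is to bound each of the four terms on the right-hand side of \eqref{3.1} by the single expression appearing on the right-hand side of \eqref{4.1}. The mechanism is the standard one: since $\tilde k_j-k_j=2^{-(j+1)}(\tilde k_{j+1}-k_j+\dots)$, more precisely $\tilde k_j-k_j\ge c\,2^{-j}\tilde k$, on the set $\{u>\tilde k_j\}$ one has $w_j=(u-k_j)_+\ge \tilde k_j-k_j\ge c2^{-j}\tilde k$, hence for any exponents $a\le q$,
\begin{align*}
\tilde w_j^{\,a}\le w_j^{\,a}\le \big(c2^{-j}\tilde k\big)^{a-q}w_j^{\,q}=C\,\frac{2^{(q-a)j}}{\tilde k^{\,q-a}}\,w_j^{\,q}\quad\text{on }\{u>\tilde k_j\},
\end{align*}
and $\tilde w_j\equiv 0$ otherwise. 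Applying this with $a=p$ to the gradient-of-cutoff term produces the factor $2^{(q-p)j}\tilde k^{p-q}$ times $|\nabla\psi_j|^p\sim 2^{pj}((1-\sigma)r)^{-p}$, i.e. $r^{-p}(1-\sigma)^{-p}\cdot 2^{(p+q-2)j}/\tilde k^{q-2}$ after absorbing $2^{(q-p)j+pj}=2^{qj}\le 2^{(p+q-2)j}$ (valid since $q\ge 2$); with $a=2$ to the time term it gives $r^{-p}(1-\sigma)^{-p}\cdot 2^{(p+q-2)j}/\tilde k^{q-2}$ directly. For the nonlocal cutoff term one estimates $|\psi_j(x)-\psi_j(y)|^p\le \min\{1,|x-y|^p|\nabla\psi_j|_\infty^p\}$ and integrates the kernel $|x-y|^{-N-sp}$ over $B_{r_j}$, which after the usual splitting at scale $|\nabla\psi_j|^{-1}$ yields a factor comparable to $|\nabla\psi_j|^{sp}r_j^{N}\sim 2^{spj}((1-\sigma)r)^{-sp}$—here is where the $\sigma^{-p}(1-\sigma)^{-N-sp}$ combination and the $2^{(N+sp+q-1)j}$ power enter after using the bound with $a=p$ on $(\max\{\tilde w_j(x),\tilde w_j(y)\})^p$. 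Finally the tail term: by the definition of $k_j$ and the hypothesis $\tilde k\ge \tfrac12\mathrm{Tail}_\infty(u_+;x_0,\sigma r,t_0-r^p,t_0)$ one shows $\mathrm{ess\,sup}_{t,x\in\mathrm{supp}\,\psi_j}\int_{\mathbb R^N\setminus B_{r_j}}\tilde w_j^{p-1}(y,t)|x-y|^{-N-sp}\,dy\le C\,2^{(N+sp)j}\tilde k^{p-1}r^{-p}\sigma^{-N-sp}$ (comparing $|x-y|$ with $|x_0-y|$ on $\mathrm{supp}\,\psi_j$, using that $u_+\le \tilde k+w_j$ pointwise outside $B_{\sigma r}$ together with the tail bound on $\tilde k$), and then multiplies by $\int\int \tilde w_j\psi_j^p\le C2^{(q-1)j}\tilde k^{1-q}\int_{\Gamma_j}\mint_{B_j}w_j^q$ to obtain the $2^{(N+sp+q-1)j}/\tilde k^{q-p}$ contribution.

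\smallskip\noindent
The main obstacle will be the bookkeeping of the exponents in the tail term and in the nonlocal cutoff term so that every contribution is genuinely controlled by the two stated combinations $\sigma^{-p}(1-\sigma)^{-N-sp}\,2^{(N+sp+q-1)j}\tilde k^{2-q}\cdots$ — one must verify that the crude bounds $2^{qj}\le 2^{(p+q-2)j}$, $2^{(N+sp+q-1)j}$ absorb all stray powers of $2^j$ produced by $|\nabla\psi_j|$, by $|\partial_t\eta_j|$, and by the tail estimate, and that the powers of $\tilde k$ come out exactly as $\tilde k^{2-q}$ and $\tilde k^{p-q}$. The delicate point in the tail estimate is that $\tilde k$ controls the tail at radius $\sigma r$, not at $r_j$, which is why the factor $\sigma^{-N-sp}\ge 1$ is needed; everything else is the routine De Giorgi machinery, and the structural inequalities \eqref{3.4}, \eqref{3.5} quoted from \cite{DZZ21} inside Lemma~\ref{lem-3-1} are used exactly as stated.
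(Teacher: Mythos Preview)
Your approach is essentially the same as the paper's: apply Lemma~\ref{lem-3-1} on $B_{r_j}$ with level $\tilde k_j$ and the cutoffs you describe, then use the elementary bound $\tilde w_j^{\,a}\le C\,2^{(q-a)j}\tilde k^{a-q}w_j^{\,q}$ (this is the paper's inequality \eqref{4.2}) to convert each right-hand term into an integral of $w_j^q$. The paper in fact cites \cite{DZZ21} for the estimates of the nonlocal cutoff term and the tail term, exactly as you intend.

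Two bookkeeping corrections to your sketch, both of the type you anticipated. First, the gradient-of-cutoff term: using $a=p$ gives $\tilde k^{p-q}$, not $\tilde k^{2-q}$, so this term contributes $C\,2^{qj}/((1-\sigma)^p r^p\tilde k^{q-p})$ and is absorbed into the $2^{(N+sp+q-1)j}/\tilde k^{q-p}$ bucket (via $q\le N+sp+q-1$, valid for all $p>1$), not into the $2^{(p+q-2)j}/\tilde k^{q-2}$ bucket; your absorption $2^{qj}\le 2^{(p+q-2)j}$ would require $p\ge 2$. Second, the tail term: comparing $|x-y|$ with $|y-x_0|$ for $x\in\mathrm{supp}\,\psi_j\subset B_{\tilde r_j}$ produces the factor $(2^j/(1-\sigma))^{N+sp}$, and then using $\mathbb R^N\setminus B_{r_j}\subset\mathbb R^N\setminus B_{\sigma r}$ together with the tail hypothesis gives an extra $(\sigma r)^{-p}\tilde k^{p-1}$; the resulting constant is $\sigma^{-p}(1-\sigma)^{-(N+sp)}$, not $\sigma^{-(N+sp)}$ as you wrote. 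With these two fixes the four contributions collapse precisely to the two terms in \eqref{4.1}.
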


\begin{proof}
First we give a trivial but very useful inequality
\begin{align}
\label{4.2}
\tilde{w}_j^{\tau}(x,t) \leq \frac{C 2^{(q-\tau)j}}{\tilde{k}^{q-\tau}} w_j^q(x,t) \quad \text { in } Q_T,
\end{align}	
where $0\leq\tau< q$. We take the cut-off functions $\psi_j \in C_0^\infty(\tilde{B}_j)$ and $\eta_j \in C_0^\infty(\tilde{\Gamma}_j)$ such that
\begin{align*}
0 \leq \psi_j \leq 1, \quad\left|\nabla \psi_j\right| \leq \frac{C 2^j}{(1-\sigma) r} \text { in } \tilde{B}_j, \quad \psi_j \equiv 1 \text { in } B_{j+1}
\end{align*}
and
\begin{align*}
0 \leq \eta_j \leq 1, \quad\left|\partial_t \eta_j\right| \leq \frac{C 2^{pj}}{(1-\sigma)^pr^p} \text { in } \tilde{\Gamma}_j, \quad \eta_j \equiv 1 \text { in } \Gamma_{j+1}.
\end{align*}
Let $r=r_j$, $\tau_2=t_0$, $\tau_1=t_0-r_{j+1}^p$ and $\ell=\tilde{r}_j^p-r_{j+1}^p$ in Lemma \ref{lem-3-1}. Then we arrive at
\begin{align}
\label{4.3}
&\quad\int_{\tilde{\Gamma}_j} \int_{B_j}|\nabla \tilde{w}_j(x,t)|^p\psi_j^p(x)\eta_j^2(t)\,dxdt+\operatorname*{ess\,\sup}_{t\in\Gamma_{j+1}}\int_{B_j} \tilde{w}_j^2(x,t)\psi_j^p(x)\, dx\nonumber \\
&\quad+\int_{\tilde{\Gamma}_j} \int_{B_j} \int_{B_j} \left|\tilde{w}_j(x,t) \psi_j(x)-\tilde{w}_j(y,t) \psi_j(y)\right|^p\eta_j^2(t)\, d\mu dt\nonumber\\
&\leq C\int_{\tilde{\Gamma}_j} \int_{B_j}|\nabla \psi_j(x)|^p \tilde{w}_j^p(x,t)\eta_j^2(t)\,dxdt\nonumber\\
&\quad+C\int_{\tilde{\Gamma}_j} \int_{B_j} \int_{B_j}(\max\{\tilde{w}_j(x,t), \tilde{w}_j(y,t)\})^p\left|\psi_j(x)-\psi_j(y)\right|^p \eta_j^2(t)\,d\mu dt\nonumber \\
&\quad+C\mathop{\mathrm{ess}\,\sup}_{\stackrel{t\in\tilde{\Gamma}_j}{x \in \mathrm{supp}\, \psi_j}}\int_{\mathbb{R}^N\backslash B_j} \frac{\tilde{w}_j^{p-1}(y,t)}{|x-y|^{N+s p}}\,dy\int_{\tilde{\Gamma}_j} \int_{B_j}\tilde{w}_j(x,t)\psi_j^p(x) \eta_j^2(t)\,d xdt\nonumber\\
&\quad+C\int_{\tilde{\Gamma}_j} \int_{B_j} \tilde{w}_j^2(x,t)\psi_j^p(x) \eta_j(t)\left|\partial_t \eta_j(t)\right|\,dxdt\nonumber\\
&=: I_1+I_2+I_3+I_4.
\end{align}
Using inequality \eqref{4.2} and the definition of $\psi_j$, we estimate $I_1$ as
\begin{align}
\label{4.4}
I_1&\leq \frac{C 2^{pj}}{(1-\sigma)^{p} r^p} \int_{{\Gamma}_j} \int_{B_j} \tilde{w}_j^p(x,t)\,dxdt\nonumber\\
&\leq \frac{C 2^{qj}}{\tilde{k}^{q-p}(1-\sigma)^{p} r^p} \int_{{\Gamma}_j} \int_{B_j} w_j^q(x,t)\,dxdt.
\end{align}
Analogous to the proof of Lemma 4.1 in \cite{DZZ21}, we have
\begin{align}
\label{4.5}
I_2\leq \frac{C 2^{qj}}{\tilde{k}^{q-p}(1-\sigma)^p r^{sp}} \int_{\Gamma_j} \int_{B_j} w_j^q(x,t)\,dxdt,
\end{align}
\begin{align}
\label{4.6}
I_3\leq \frac{C 2^{(N+sp+q-1)j}}{\tilde{k}^{q-p} \sigma^p(1-\sigma)^{N+sp}r^p} \int_{\Gamma_j} \int_{B_j} w_j^q(x,t)\,dxdt
\end{align}
and
\begin{align}
\label{4.7}
I_4\leq \frac{C 2^{(p+q-2)j}}{\tilde{k}^{q-2}(1-\sigma)^p r^p } \int_{\Gamma_j} \int_{B_j} w_j^q(x,t)\,dxdt.
\end{align}
By virtue of the facts that $\psi_j\equiv 1$ in $B_{j+1}$,  $\eta_j \equiv 1$ in $\Gamma_{j+1}$ and \eqref{1.4}, merging inequalities \eqref{4.3}--\eqref{4.7}, we get
\begin{align*}
&\quad\int_{\Gamma_{j+1}}\mint_{B_{j+1}}|\nabla \tilde{w}_j(x,t)|^p\,dxdt+\operatorname*{ess\,\sup}_{t\in\Gamma_{j+1}}\mint_{B_{j+1}} \tilde{w}_j^2(x,t)\, dx\\
&\quad+\int_{\Gamma_{j+1}} \int_{B_{j+1}} \mint_{B_{j+1}} \frac{|\tilde{w}_j(x,t)-\tilde{w}_j(y,t)|^p}{|x-y|^{N+sp}}\,dxdydt\nonumber\\
&\leq \frac{C}{r^p(1-\sigma)^p}\left(\frac{2^{qj}}{r^{(s-1)p}\tilde{k}^{q-p}}+\frac{2^{(N+sp+q-1)j}}{\sigma^p(1-\sigma)^{N+p(s- 1)}\tilde{k}^{q-p}}+\frac{2^{qj}}{\tilde{k}^{q-p}}+\frac{2^{(p+q-2)j}}{\tilde{k}^{q-2}}\right) \nonumber\\ &\quad\times\int_{\Gamma_j} \mint_{B_j} w_j^q(x,t)\,dxdt.
\end{align*}
After rearrangement, we get the desired result.
\end{proof}

The following two lemmas are the consequences of Lemmas \ref{lem-2-3} and \ref{lem-4-1}.

\begin{lemma}
\label{lem-4-2}
Let $p>2N/(N+2)$ and $\max\{p,2\}\leq q<p(N+2)/N$. Let $(x_0,t_0) \in Q_T$, $0<r<1$ and $Q_r^-=B_r(x_0) \times(t_0-r^p,t_0)$ such that $\overline{B}_{r}(x_0) \subseteq \Omega$ and $\left[t_0-r^p,t_0\right] \subseteq(0,T)$. Then for a local subsolution $u$ to \eqref{1.1}, we infer that
\begin{align}
\label{4.8}
&\quad\int_{\Gamma_{j+1}}\mint_{B_{j+1}}w_{j+1}^q(x,t)\,dxdt\nonumber\\
&\leq \frac{C 2^{bj}}{r^{\frac{pq}{\kappa N}}}\left(\frac{1}{\sigma^{\frac{q(N+p)}{\kappa N}}(1-\sigma)^{\frac{q(N+p)(N+sp)}{p \kappa N}}}+\frac{1}{(1-\sigma)^{\frac{q(N+p)}{\kappa N}}}\right)\nonumber\\
&\quad\times\left(\frac{1}{\tilde{k}^{\frac{q}{\kappa}(\frac{q}{N}+1-\frac{2}{p})}}+\frac{1}{\tilde{k}^{\frac{q}{\kappa}(\frac{q}{N}+\frac{2}{N}-\frac{p}{N})}}\right)\left(\int_{\Gamma_j} \mint_{B_j}w_j^q(x,t)\,dxdt\right)^{1+\frac{q}{\kappa N}}
\end{align}
for $j\in\mathbb{N}$, where $b:=(1+p/N)(N+p+q)$, $\kappa:=1+2/N$ and $C>0$ only depends on $N,p,s,\Lambda$ and $q$.
\end{lemma}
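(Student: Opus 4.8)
The plan is to combine the Caccioppoli-type recursive inequality of Lemma \ref{lem-4-1} with the parabolic Sobolev inequality of Lemma \ref{lem-2-3}, applied to the function $\tilde w_j\psi_j$ (or rather to $\tilde w_j$ on the smaller cylinder $Q_{j+1}^-$, using the cut-off to localize). First I would fix the gain-of-integrability exponent: since $q<p(N+2)/N=p\kappa$, one has $p\kappa=p(1+2/N)$ strictly above $q$, so we may estimate $w_{j+1}^q$ on $Q_{j+1}^-$ by Hölder's inequality against $w_{j+1}^{p\kappa}$ together with a measure estimate of the super-level set $\{u>\tilde k_j\}$. More precisely, on $\Gamma_{j+1}$ one writes
\begin{align*}
\int_{\Gamma_{j+1}}\mint_{B_{j+1}}w_{j+1}^q\,dxdt
\le\Big(\int_{\Gamma_{j+1}}\mint_{B_{j+1}}w_{j+1}^{p\kappa}\,dxdt\Big)^{\frac{q}{p\kappa}}
\Big|\{(x,t)\in Q_{j+1}^-:u>\tilde k_j\}\Big|^{1-\frac{q}{p\kappa}}\cdot|B_{j+1}|^{-(1-\frac{q}{p\kappa})},
\end{align*}
and the measure of the super-level set is controlled by Chebyshev: since $w_j=(u-k_j)_+\ge\tilde k_j-k_j=2^{-(j+2)}\tilde k$ on that set, we get
$|\{u>\tilde k_j\}\cap Q_{j+1}^-|\le (2^{j+2}/\tilde k)^q\int_{\Gamma_j}\int_{B_j}w_j^q\,dxdt$.

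The second step is to bound the $p\kappa$-integral of $w_{j+1}$ via Lemma \ref{lem-2-3}. Applying \eqref{2.2} with $u$ replaced by $\tilde w_j$ on $B_{j+1}\times\Gamma_{j+1}$ (legitimate because the left side of \eqref{4.1} controls exactly the three quantities — the gradient term, the $L^\infty_tL^2_x$ term, and an $L^p$ term, the last of which is dominated by the gradient plus lower-order contributions already appearing on the right of \eqref{4.1} through \eqref{4.2} with $\tau=p$) yields
\begin{align*}
\int_{\Gamma_{j+1}}\mint_{B_{j+1}}\tilde w_j^{p\kappa}\,dxdt
\le C\Big(r^p\!\!\int_{\Gamma_{j+1}}\!\!\mint_{B_{j+1}}\!\!|\nabla\tilde w_j|^p+\!\int_{\Gamma_{j+1}}\!\!\mint_{B_{j+1}}\!\!\tilde w_j^p\Big)
\Big(\operatorname*{ess\,sup}_{\Gamma_{j+1}}\mint_{B_{j+1}}\tilde w_j^2\Big)^{\frac{p}{N}}.
\end{align*}
Each of the three factors is now estimated by the right-hand side of \eqref{4.1}: the bracket is bounded by $r^p$ times that right-hand side (after absorbing the $r^p$ and using $r<1$ together with \eqref{4.2} to convert the $\tilde w_j^p$ term, and noting $\int|\nabla\tilde w_j|^p$ carries no $r^p$ so we get an extra $r^p$), and the essential-supremum factor is its $p/N$-th power. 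Since $w_{j+1}\le\tilde w_j$ pointwise, one replaces $w_{j+1}^{p\kappa}$ by $\tilde w_j^{p\kappa}$ for free. Multiplying through, collecting the powers of $2^j$, of $r$, of $(1-\sigma)$ and $\sigma$, and of $\tilde k$, and finally exponentiating the whole chain to the power $q/(p\kappa)$ and combining with the Chebyshev factor (which contributes $2^{q(1-q/p\kappa)j}\tilde k^{-q(1-q/p\kappa)}\big(\int_{\Gamma_j}\mint_{B_j}w_j^q\big)^{1-q/p\kappa}$), produces an estimate of the form $(\text{constants})\cdot\big(\int_{\Gamma_j}\mint_{B_j}w_j^q\big)^{q/p\kappa+1-q/p\kappa}=(\cdots)^{1+q/(\kappa N)}$ once one checks the arithmetic identity
\[
\frac{q}{p\kappa}\Big(1+\frac{p}{N}\Big)+\Big(1-\frac{q}{p\kappa}\Big)=1+\frac{q}{\kappa N},
\]
which holds because $(1+p/N)/(p\kappa)=(N+p)/(p(N+2))$ and a short computation gives the claimed exponent; similarly the bookkeeping of the $\tilde k$-powers must reproduce $\frac{q}{\kappa}(\frac{q}{N}+1-\frac{2}{p})$ and $\frac{q}{\kappa}(\frac{q}{N}+\frac{2}{N}-\frac{p}{N})$ from the two terms $\tilde k^{-(q-2)}$ and $\tilde k^{-(q-p)}$ in \eqref{4.1} combined with the factor $\tilde k^{-q(1-q/p\kappa)}$ and raised to $q/p\kappa$ on the $L^\infty$-in-time factor as well.

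The main obstacle I expect is purely organizational: keeping track of the exact exponents of $2^j$, of $r$, of $\sigma$ and $(1-\sigma)$, and of $\tilde k$ through the Hölder split, the application of \eqref{2.2}, and the final exponentiation by $q/(p\kappa)$. In particular verifying that the two $\tilde k$-exponents collapse to the stated $\frac{q}{\kappa}(\frac{q}{N}+1-\frac{2}{p})$ and $\frac{q}{\kappa}(\frac{q}{N}+\frac{2}{N}-\frac{p}{N})$, and that the powers of $\sigma,(1-\sigma)$ assemble into $\sigma^{-q(N+p)/(\kappa N)}$ and $(1-\sigma)^{-q(N+p)(N+sp)/(p\kappa N)}$ respectively (the latter coming from the $(1-\sigma)^{-(N+sp)}$ inside the parenthesis of \eqref{4.1} raised to the relevant power), is where all the care is needed; the structural steps themselves — Caccioppoli, parabolic Sobolev embedding, Chebyshev on the level set — are standard De Giorgi iteration ingredients. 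One should also note that the condition $p>2N/(N+2)$ is exactly what guarantees $p\kappa=p(N+2)/N>2$ so that Lemma \ref{lem-2-3} is applicable and $q\ge 2$ can be accommodated.
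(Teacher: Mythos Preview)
Your proposal is correct and follows essentially the same route as the paper: a H\"older split of $\tilde w_j^q$ against $\tilde w_j^{p\kappa}$ and the level-set measure (estimated by Chebyshev, cf.\ \eqref{4.10}), then Lemma~\ref{lem-2-3} for the $p\kappa$-integral combined with the Caccioppoli estimate of Lemma~\ref{lem-4-1} and the elementary bound \eqref{4.2} for the $\tilde w_j^p$ term. The only remaining work is indeed the bookkeeping of exponents, which the paper carries out explicitly in \eqref{4.9}--\eqref{4.12}.
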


\begin{proof}
Since $q<p\kappa$, it follows from H\"{o}lder inequality that
\begin{align}
\label{4.9}
&\quad\int_{\Gamma_{j+1}} \mint_{B_{j+1}} w_{j+1}^q(x,t)\,dxdt \nonumber\\
&\leq \int_{\Gamma_{j+1}} \mint_{B_{j+1}} \tilde{w}_j^q(x,t)\,dxdt\nonumber\\
&\leq\left(\int_{\Gamma_{j+1}} \mint_{B_{j+1}} \tilde{w}_j^{p \kappa}(x,t)\,dxdt\right)^{\frac{q}{p\kappa}}\left(\int_{\Gamma_{j+1}} \mint_{B_{j+1}} \chi_{\{u \geq \tilde{k}_j\}}(x,t)\,dxdt\right)^{1-\frac{q}{p \kappa}}.
\end{align}
From \eqref{4.2}, we can get
\begin{align}
\label{4.10}
\int_{\Gamma_{j+1}} \mint_{B_{j+1}} \chi_{\{u \geq \tilde{k}_j\}}(x,t)\, dxdt \leq \frac{C 2^{qj}}{\tilde{k}^q} \int_{\Gamma_j} \mint_{B_j} w_j^q(x,t)\,dxdt
\end{align}
and
\begin{align}
\label{4.11}
\int_{\Gamma_{j+1}} \mint_{B_{j+1}} \tilde{w}_j^p(x,t)\,dxdt \leq \frac{C 2^{(q-p)j}}{\tilde{k}^{q-p}} \int_{\Gamma_j} \mint_{B_j} w_j^q(x,t)\,dxdt.
\end{align}
Now by \eqref{4.11}, Lemma \ref{lem-2-3} and Lemma \ref{lem-4-1} we estimate
\begin{align}
\label{4.12}
&\quad\int_{\Gamma_{j+1}}\mint_{B_{j+1}}\tilde{w}_j^{p\kappa}(x,t)\,dxdt\nonumber\\
&\leq C
\left(r^p \int_{\Gamma_{j+1}} \mint_{B_{j+1}} |\nabla \tilde{w}_j(x,t)|^p\,dxdt
+\int_{\Gamma_{j+1}} \mint_{B_{j+1}} \tilde{w}_j^p(x,t)\,dxdt\right)\nonumber\\
&\quad\times\left(\operatorname*{ess\,\sup}_{t\in\Gamma_{j+1}} \mint_{B_{j+1}} \tilde{w}_j^2(x,t)\,dx\right)^{\frac{p}{N}}\nonumber\\
&\leq Cr^{-\frac{p^2}{N}}\left(\frac{1}{\sigma^p(1-\sigma)^{N+s p}}+\frac{1}{(1-\sigma)^p}\right)^\frac{p}{N} \left(\frac{2^{(p+q-2) j}}{\tilde{k}^{q-2}}+\frac{2^{(N+sp+q-1)j}}{\tilde{k}^{q-p}}\right)^\frac{p}{N} \nonumber\\
&\quad\times\left[\left(\frac{1}{\sigma^p(1-\sigma)^{N+s p}}+\frac{1}{(1-\sigma)^p}\right)\left(\frac{2^{(p+q-2) j}}{\tilde{k}^{q-2}}+\frac{2^{(N+sp+q-1)j}}{\tilde{k}^{q-p}}\right)+\frac{2^{(q-p)j}}{\tilde{k}^{q-p}}\right] \nonumber\\
&\quad\times\left(\int_{\Gamma_j} \mint_{B_j} w_j^q(x,t)\,dxdt\right)^{1+\frac{p}{N}} \nonumber\\
&\leq\frac{C 2^{bj}}{r^{\frac{p^2}{N}}}\left(\frac{1}{\sigma^{\frac{p(N+p)}{N}}(1-\sigma)^{\frac{(N+p)(N+sp)}{N}}}+\frac{1}{(1-\sigma)^{\frac{p(N+p)}{N}}}\right)\nonumber\\
&\quad\times\left(\frac{1}{\tilde{k}^{q-2}}+\frac{1}{\tilde{k}^{q-p}}\right)^{1+\frac{p}{N}}\left(\int_{\Gamma_j} \mint_{B_j} w_j^q(x,t)\,dxdt\right)^{1+\frac{p}{N}}
\end{align}
with $b=(1+p/N)(N+p+q)$. Combining \eqref{4.9}, \eqref{4.10} and \eqref{4.12}, we get the desired result.
\end{proof}

\begin{lemma}
\label{lem-4-3}
Let $1<p\leq2N/(N+2)$ and $m>\max\left\{2,\frac{N(2-p)}{p}\right\}$. Let $(x_0,t_0) \in Q_T$, $0<r<1$ and $Q_r^-=B_r(x_0) \times(t_0-r^p,t_0)$ such that $\overline{B}_{r}(x_0) \subseteq \Omega$ and $\left[t_0-r^p,t_0\right] \subseteq(0,T)$. Suppose that $u\in L_{\rm{loc}}^\infty(Q_T)$ is a weak subsolution to \eqref{1.1}. Then for any $j\in \mathbb{N}$, we have
\begin{align}
\label{4.13}
&\quad\int_{\Gamma_{j+1}}\mint_{B_{j+1}}w_{j+1}^m(x,t)\,dxdt\nonumber\\
&\leq \frac{C 2^{bj}}{r^{\frac{p^2}{N}}}\left(\frac{1}{\sigma^\frac{p(N+p)}{N}(1-\sigma)^{\frac{(N+p)(N+sp)}{N}}}+\frac{1}{(1-\sigma)^{\frac{p(N+p)}{N}}}\right)\left(\frac{1}{\tilde{k}^{m-p}}+\frac{1}{\tilde{k}^{m-2}}\right)^{1+\frac{p}{N}}\nonumber\\
&\quad\times\left\|\tilde{w}_j\right\|_{L^\infty\left(Q_{j+1}^-\right)}^{m-p\kappa}\left(\int_{\Gamma_j} \mint_{B_j}w_j^m(x,t)\,dxdt\right)^{1+\frac{p}{N}},
\end{align}
where $b:=(1+p/N)(N+p+m)$, $\kappa:=1+2/N$ and $C>0$ only depends on $N,p,s,m$ and $\Lambda$.
\end{lemma}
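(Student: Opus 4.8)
\textbf{Proof proposal for Lemma \ref{lem-4-3}.}

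The plan is to mimic the structure of the proof of Lemma \ref{lem-4-2}, but to replace the use of the Sobolev inequality from Lemma \ref{lem-2-3} (which exploits the $L^\infty$-in-time control of the $L^2$-norm) by an interpolation argument that exploits the extra assumption $u\in L^\infty_{\rm loc}(Q_T)$. The key point is that in the subquadratic range $1<p\le 2N/(N+2)$ one has $p\kappa=p(1+2/N)\le 2$, so $L^{p\kappa}$ is no longer ``above'' $L^p$ in an immediately useful way; instead, since $m>p\kappa$, we will bound the $L^m$-level quantity by splitting the exponent $m=(m-p\kappa)+p\kappa$, estimating the $L^{p\kappa}$ part by Lemma \ref{lem-2-3} applied to $\tilde w_j$ and the leftover power $m-p\kappa$ by the $L^\infty$-norm of $\tilde w_j$ on $Q_{j+1}^-$. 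This is precisely the factor $\|\tilde w_j\|_{L^\infty(Q_{j+1}^-)}^{m-p\kappa}$ appearing in \eqref{4.13}.

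In detail, I would first record the pointwise inequality \eqref{4.2} (now with $q$ replaced by $m$), and the two consequences that $\int_{\Gamma_{j+1}}\mint_{B_{j+1}}\chi_{\{u\ge\tilde k_j\}}\,dxdt\le C2^{mj}\tilde k^{-m}\int_{\Gamma_j}\mint_{B_j}w_j^m\,dxdt$ and $\int_{\Gamma_{j+1}}\mint_{B_{j+1}}\tilde w_j^p\,dxdt\le C2^{(m-p)j}\tilde k^{-(m-p)}\int_{\Gamma_j}\mint_{B_j}w_j^m\,dxdt$, exactly as in \eqref{4.10}--\eqref{4.11}. Next, starting from $\tilde w_j^m=\tilde w_j^{p\kappa}\,\tilde w_j^{m-p\kappa}$ and pulling the $L^\infty$ factor out, I would write
\begin{align*}
\int_{\Gamma_{j+1}}\mint_{B_{j+1}}\tilde w_j^m\,dxdt\le \|\tilde w_j\|_{L^\infty(Q_{j+1}^-)}^{m-p\kappa}\int_{\Gamma_{j+1}}\mint_{B_{j+1}}\tilde w_j^{p\kappa}\,dxdt,
\end{align*}
and then apply Lemma \ref{lem-2-3} to the last integral (with $r=r_{j+1}$, on $B_{j+1}\times\Gamma_{j+1}$) to get it bounded by $C(r^p\!\int\!\mint|\nabla\tilde w_j|^p+\int\!\mint\tilde w_j^p)\cdot(\operatorname*{ess\,sup}_t\mint\tilde w_j^2)^{p/N}$. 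Here the hypothesis $m>N(2-p)/p$ guarantees $m-p\kappa>0$, so the exponent on the $L^\infty$ factor is positive and the split is legitimate.

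Finally I would feed in the Caccioppoli estimate \eqref{4.1} from Lemma \ref{lem-4-1} (with $q$ there taken to be $m$, which is admissible since $m>\max\{p,2\}$) to control both the gradient term and the $\operatorname*{ess\,sup}_t\mint\tilde w_j^2$ term, together with \eqref{4.11} for the lower-order $\tilde w_j^p$ term; collecting the powers of $2^j$, of $r$, of $\sigma$, of $(1-\sigma)$, and of $\tilde k$ (using $(\tilde k^{-(m-2)}+\tilde k^{-(m-p)})$ as the combined $\tilde k$-factor and absorbing all the mixed terms), and setting $b:=(1+p/N)(N+p+m)$, yields \eqref{4.13}. The main obstacle is purely bookkeeping: tracking that the various exponents of $2^j$ coming from \eqref{4.1} and from \eqref{4.11}, after being raised to the power $1+p/N$ by the $(\operatorname*{ess\,sup}_t\mint\tilde w_j^2)^{p/N}$ factor and multiplied together, all collapse into a single geometric factor $2^{bj}$ with $b$ as stated, and that the homogeneity in $r$ comes out as $r^{-p^2/N}$ (note $r^{sp}\le r^p$ and $(1-\sigma)\le 1$ are used to simplify, exactly as at the end of the proof of Lemma \ref{lem-4-1}). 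No genuinely new analytic input beyond Lemmas \ref{lem-2-3}, \ref{lem-4-1} and the $L^\infty$ bound is needed.
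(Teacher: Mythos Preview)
Your proposal is correct and follows essentially the same route as the paper: split $\tilde w_j^m=\tilde w_j^{m-p\kappa}\tilde w_j^{p\kappa}$ and extract the $L^\infty$ factor, apply Lemma \ref{lem-2-3} to the $L^{p\kappa}$ integral, and then invoke Lemma \ref{lem-4-1} with $q=m$ together with \eqref{4.11} (with $q=m$) to arrive at \eqref{4.13}. The only superfluous ingredient in your outline is the level-set estimate \eqref{4.10}, which is used in Lemma \ref{lem-4-2} but not here; also, the positivity of $m-p\kappa$ follows already from $m>2\ge p\kappa$ in the range $p\le 2N/(N+2)$.
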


\begin{proof}
Based on the assumptions, we have
\begin{align}
\label{4.14}
\int_{\Gamma_{j+1}} \mint_{B_{j+1}} w_{j+1}^m(x,t)\,dxdt& \leq \int_{\Gamma_{j+1}} \mint_{B_{j+1}} \tilde{w}_j^m(x,t)\,dxdt\nonumber \\
& \leq\left\|\tilde{w}_j\right\|_{L^\infty\left(Q_{j+1}^-\right)}^{m-p\kappa} \int_{\Gamma_{j+1}} \mint_{B_{j+1}} \tilde{w}_j^{p\kappa}(x,t)\,dxdt.
\end{align}
By utilizing Lemma \ref{lem-2-3}, Lemma \ref{lem-4-1} with $q=m$ and inequality \eqref{4.11} with $q=m$, it yields that
\begin{align}
\label{4.15}
&\quad\int_{\Gamma_{j+1}} \mint_{B_{j+1}} \tilde{w}_j^{p\kappa}(x,t)\,dxdt\nonumber\\
&\leq C
\left(r^p \int_{\Gamma_{j+1}} \mint_{B_{j+1}} |\nabla \tilde{w}_j(x,t)|^p\,dxdt
+\int_{\Gamma_{j+1}} \mint_{B_{j+1}} \tilde{w}_j^p(x,t)\,dxdt\right)\nonumber\\
&\quad\times\left(\operatorname*{ess\,\sup}_{t\in\Gamma_{j+1}} \mint_{B_{j+1}} \tilde{w}_j^2(x,t)\,dx\right)^{\frac{p}{N}}\nonumber\\
&\leq\frac{C 2^{bj}}{r^{\frac{p^2}{N}}}\left[\frac{1}{\sigma^{\frac{p(N+p)}{N}}(1-\sigma)^{\frac{(N+p)(N+sp)}{N}}}+\frac{1}{(1-\sigma)^{\frac{p(N+p)}{N}}}\right]\left(\frac{1}{\tilde{k}^{m-p}}+\frac{1}{\tilde{k}^{m-2}}\right)^{1+\frac{p}{N}}\nonumber\\
&\quad\times\left(\int_{\Gamma_j} \mint_{B_j}w_j^m(x,t)\,dxdt\right)^{1+\frac{p}{N}}
\end{align}
with $b:=(1+p/N)(N+p+m)$. Thus combining \eqref{4.14} and \eqref{4.15}, we get the desired inequality \eqref{4.13}.
\end{proof}

\begin{remark}
In Lemma \ref{lem-4-3}, the quantity $\sigma^{\frac{p(N+p)}{N}}$ can be removed, since $\sigma\in[1/2,1)$. In addition,
\begin{align*}
\max\left\{(1-\sigma)^{\frac{(N+p)(N+sp)}{N}},(1-\sigma)^{\frac{p(N+p)}{N}}\right\}\geq(1-\sigma)^{\frac{(N+p)^2}{N}}.
\end{align*}
Thus, we can get
\begin{align*}
&\quad\int_{\Gamma_{j+1}}\mint_{B_{j+1}}w_{j+1}^m(x,t)\,dxdt\nonumber\\
&\leq \frac{C 2^{bj}}{r^{\frac{p^2}{N}}}\cdot \frac{1}{(1-\sigma)^{\frac{(N+p)^2}{N}}}\left(\frac{1}{\tilde{k}^{m-p}}+\frac{1}{\tilde{k}^{m-2}}\right)^{1+\frac{p}{N}}\nonumber\\
&\quad\times\left\|\tilde{w}_j\right\|_{L^\infty\left(Q_{j+1}^-\right)}^{m-p\kappa}\left(\int_{\Gamma_j} \mint_{B_j}w_j^m(x,t)\,dxdt\right)^{1+\frac{p}{N}}.
\end{align*}
\end{remark}

Next, we introduce an analysis lemma which will be used later.

\begin{lemma}\rm{(\cite[Lemma 4.1]{D93})}
\label{lem-4-4}
Let $\{Y_j\}_{j=0}^\infty$ be a sequence of positive numbers such that
\begin{align*}
Y_0\leq K^{-\frac{1}{\delta}}b^{-\frac{1}{\delta^2}}\ \text{ and }\ Y_{j+1}\leq Kb^jY_j^{1+\delta},\quad j=0,1,2, \ldots
\end{align*}
for some constants $K$, $b>1$ and $\delta>0$. Then we have $\lim_{j\rightarrow\infty}Y_j=0$.
\end{lemma}

Finally, we end this section by proving the results of local boundedness.

\begin{proof}[\textbf{Proof of Theorem \ref{thm-1-2}}]
Let $r=R$, $\sigma=\frac{1}{2}$, then $r_j=\frac{R}{2}+2^{-j-1}R$. Fix $q=\max\{2,p\}$. We denote
\begin{align*}
Y_j=\int_{\Gamma_j} \mint_{B_j}\left(u-k_j\right)_+^q\,dxdt, \quad j=0,1,2, \ldots.
\end{align*}
Supposing $\tilde{k}\geq 1$ and recalling $r<1$, we derive from Lemma \ref{lem-4-2} that
\begin{align}
\label{4.16}
\frac{Y_{j+1}}{r^p} &\leq  \frac{C 2^{bj} Y_j^{1+\frac{q}{N\kappa}}}{r^{p\left(1+\frac{q}{N \kappa}\right)}\tilde{k}^{\frac{q}{\kappa}\left(\frac{q}{N}+\frac{2}{N}-\frac{p}{N}\right)}}+\frac{C 2^{bj} Y_j^{1+\frac{q}{N\kappa}}}{r^{p\left(1+\frac{q}{N \kappa}\right)}\tilde{k}^{\frac{q}{\kappa}\left(\frac{q}{N}+1-\frac{2}{p}\right)}}\nonumber\\
&\leq \frac{C 2^{bj}}{\tilde{k}^{q\left(1-\frac{q}{p\kappa}\right)}}\left(\frac{Y_j}{r^p}\right)^{1+\frac{q}{N\kappa}},
\end{align}
where $b:=(1+p/N)(N+p+q)$, $\kappa:=1+2/N$ and $C>0$ only depends on $N,p,s,\Lambda$. For any $j\in\mathbb{N}$, define $W_j=Y_j/r^p$. Thus we get
\begin{align*}
W_{j+1}\leq\frac{C 2^{bj}}{\tilde{k}^{q\left(1-\frac{q}{p\kappa}\right)}}W_j^{1+\frac{q}{N\kappa}},
\end{align*}
where $\tilde{k}$ is such that
\begin{align*}
\tilde{k} \geq \max \left\{\mathrm{Tail}_\infty\left(u_+;x_{0},R/2,t_0-R^p, t_0\right), C\left(\mint^{t_0}_{t_0-R^p} \mint_{B_R} u_+^q\,dxdt\right)^{\frac{p}{N(p\kappa-q)}} \vee 1\right\}
\end{align*}
with $C$ only depending on $N,p,s$ and $\Lambda$. Thus along with Lemma \ref{lem-4-4} we can deduce that $\lim_{j\rightarrow\infty}W_j=0$. Consequently, we get the desired result
\begin{align*}
\operatorname*{ess\,\sup}_{Q_{R/2}^-}u \leq \mathrm{Tail}_\infty\left(u_+;x_0,R/2,t_0-R^p, t_0\right)+C\left(\mint_{Q_R^-} u_+^q\,dx dt\right)^{\frac{p}{N(p\kappa-q)}} \vee 1.
\end{align*}
We now finish the proof.
\end{proof}

\begin{proof}[\textbf{Proof of Theorem \ref{thm-1-4}}]
Let $R_0=R/2$ and $R_n=R/2+\sum_{i=1}^n 2^{-i-1}R$ for $n\in\mathbb{N}^+$, and $Q_n^-=B_{R_n}(x_0) \times\left(t_0-R_n^p, t_0\right)$. Set
\begin{align*}
M_n=\operatorname{ess} \sup _{Q_n^-} u_+, \quad n=0,1,2,3, \ldots.
\end{align*}
Choosing $r=R_{n+1}$ and $\sigma r=R_{n}$, then
\begin{align*}
\sigma=\frac{1 / 2+\sum_{i=1}^{n} 2^{-i-1}}{1 / 2+\sum_{i=1}^{n+1} 2^{-i-1}} \geq \frac{1}{2}.
\end{align*}
We denote
\begin{align*}
Y_j=\int_{\Gamma_j} \mint_{B_j}\left(u-k_j\right)_+^m\,dxdt, \quad j=0,1,2, \ldots.
\end{align*}
Due to Lemma \ref{lem-4-3}, we obtain
\begin{align}
Y_{j+1} &\leq  \frac{C 2^{bj}}{R_{n+1}^{\frac{p^2}{N}}}\left\|u_+\right\|_{L^\infty\left(Q_{n+1}^-\right)}^{m-p\kappa}\left(\frac{1}{(1-\sigma)^{\frac{(N+p)^2}{N}}}+\frac{1}{(1-\sigma)^{\frac{p(N+p)}{N}}}\right)\nonumber\\
&\quad\times \left(\frac{1}{\tilde{k}^{m-p}}+\frac{1}{\tilde{k}^{m-2}}\right)^{1+\frac{p}{N}}Y_j^{1+\frac{p}{N}}\nonumber\\
&\leq \frac{C 2^{bj+dn}}{R_{n+1}^{\frac{p^2}{N}}}M_{n+1}^{m-p\kappa} \left(\frac{1}{\tilde{k}^{m-p}}+\frac{1}{\tilde{k}^{m-2}}\right)^{1+\frac{p}{N}}Y_j^{1+\frac{p}{N}}
\end{align}
with $b:=(1+p/N)(N+p+m)$ and $d=(N+p)^2/N$. For any $j\in\mathbb{N}$, define $W_j=Y_j/R_n^p$. Then we get
\begin{align*}
W_{j+1}\leq C2^{bj+dn} M_{n+1}^{m-p\kappa} \left(\frac{1}{\tilde{k}^{m-p}}+\frac{1}{\tilde{k}^{m-2}}\right)^{1+\frac{p}{N}}W_j^{1+\frac{p}{N}}.
\end{align*}
According to Lemma \ref{lem-4-4}, we can see that
$$
\lim_{j\rightarrow\infty}Y_j=0
$$
if
\begin{align}
\label{4.18}
W_{0} \leq C 2^{-\frac{dnN}{p}-\frac{bN^2}{p^2}} M_{n+1}^{-\frac{N(m-p \kappa)}{p}}\left(\frac{1}{\tilde{k}^{m-p}}+\frac{1}{\tilde{k}^{m-2}}\right)^{-\frac{p+N}{p}}.
\end{align}
To ensure the above inequality, we need choose $\tilde{k}$ properly large. Indeed,
\begin{align*}
W_0=\frac{Y_0}{R_n^p}&=R_n^{-p}\int_{t_0-R_{n+1}^p}^{t_0}\mint_{B_{R_{n+1}}}u_+^m\,dxdt\\
&\leq 2^p\mint_{t_0-R_{n+1}^p}^{t_0}\mint_{B_{R_{n+1}}}u_+^m\,dxdt.
\end{align*}
Namely,
\begin{align*}
2^p\mint_{t_0-R_{n+1}^p}^{t_0}\mint_{B_{R_{n+1}}}u_+^m\,dxdt\leq C2^{-\frac{dnN}{p}-\frac{bN^2}{p^2}} M_{n+1}^{-\frac{N(m-p\kappa)}{p}}\left(\frac{1}{\tilde{k}^{m-p}}+\frac{1}{\tilde{k}^{m-2}}\right)^{-\frac{p+N}{p}},
\end{align*}
which implies that
\begin{align*}
C 2^{\frac{dnN}{N+p}}M_{n+1}^{\frac{N(m-p\kappa)}{N+p}}\left(\mint_{t_0-R_{n+1}^p}^{t_0}\mint_{B_{R_{n+1}}}u_+^m\,dxdt\right)^{\frac{p}{N+p}}\leq\left(\frac{1}{\tilde{k}^{m-p}}+\frac{1}{\tilde{k}^{m-2}}\right)^{-1}.
\end{align*}
Therefore, we take
\begin{align*}
\tilde{k}&=C 2^{\frac{dnN}{(N+p)(m-p)}}M_{n+1}^{\frac{N(m-p\kappa)}{(N+p)(m-p)}}\left(\mint_{t_0-R_{n+1}^p}^{t_0}\mint_{B_{R_{n+1}}}u_+^m\,dxdt\right)^{\frac{p}{(N+p)(m-p)}}\\
&\quad+C 2^{\frac{dnN}{(N+p)(m-2)}}M_{n+1}^{\frac{N(m-p\kappa)}{(N+p)(m-2)}}\left(\mint_{t_0-R_{n+1}^p}^{t_0}\mint_{B_{R_{n+1}}}u_+^m\,dxdt\right)^{\frac{p}{(N+p)(m-2)}}\\
&\quad+\frac{\mathrm{Tail}_\infty\left(u_+;x_0,R_n,t_0-R_{n+1}^p, t_0\right)}{2},
\end{align*}
which makes inequality \eqref{4.18} hold true. Here the constant $C$ only depends on
 $N,p,s,m$ and $\Lambda$. Under this choice, it follows from Lemma \ref{lem-4-4} that
\begin{align}
\label{4.19}
M_n=\operatorname*{ess\,\sup}_{Q_{R_n}^-} u^+&\leq C 2^{\frac{dnN}{(N+p)(m-p)}}M_{n+1}^{\frac{N(m-p\kappa)}{(N+p)(m-p)}}\left(\mint_{t_0-R_{n+1}^p}^{t_0}\mint_{B_{R_{n+1}}}u_+^m\,dxdt\right)^{\frac{p}{(N+p)(m-p)}}\nonumber\\
&\quad+C 2^{\frac{dnN}{(N+p)(m-2)}}M_{n+1}^{\frac{N(m-p\kappa)}{(N+p)(m-2)}}\left(\mint_{t_0-R_{n+1}^p}^{t_0}\mint_{B_{R_{n+1}}}u_+^m\,dxdt\right)^{\frac{p}{(N+p)(m-2)}}\nonumber\\
&\quad+\frac{\mathrm{Tail}_\infty\left(u_+;x_0,R_n,t_0-R_{n+1}^p, t_0\right)}{2}.
\end{align}
Obverse $m>\max\{2,\frac{N(2-p)}{p}\}$ and $\kappa=1+2/N$, which indicates that
\begin{align*}
0<\frac{N(m-p\kappa)}{(N+p)(m-p)},\frac{N(m-p\kappa)}{(N+p)(m-2)}<1.
\end{align*}
Now we apply the Young's inequality with $\varepsilon$ to \eqref{4.19}, arriving at
\begin{align*}
M_{n} &\leq  \varepsilon M_{n+1}+C 2^{\frac{dnN}{(N+p)(m-p-\beta)}}\varepsilon^{-\frac{\beta}{m-p-\beta}} \left(\mint_{t_0-R^p}^{t_0}\mint_{B_R}u_+^m\,dxdt\right)^{\frac{p}{(N+p)(m-p-\beta)}}\\
&\quad+C 2^{\frac{dnN}{(N+p)(m-2-\beta)}}\varepsilon^{-\frac{\beta}{m-2-\beta}}\left(\mint_{t_0-R^p}^{t_0}\mint_{B_R}u_+^m\,dxdt\right)^{\frac{p}{(N+p)(m-2-\beta)}}\\
&\quad+\frac{\mathrm{Tail}_\infty\left(u_+;x_0,R/2,t_0-R^p,t_0\right)}{2}
\end{align*}
with $\beta=(m-p\kappa)N/(p+N)$, where we used the fact $R/2\leq R_n<R$.
Via the induction argument, we can derive
\begin{align*}
M_0&\leq \varepsilon^{n+1}M_{n+1}\\
&\quad+C\varepsilon^{-\frac{\beta}{m-p-\beta}}\left(\mint_{t_0-R^p}^{t_0}\mint_{B_R}u_+^m\,dxdt\right)^{\frac{p}{(N+p)(m-p-\beta)}}\sum_{i=0}^n \left(2{\frac{dN}{(N+p)(m-p-\beta)}}\varepsilon\right)^i\\
&\quad+C\varepsilon^{-\frac{\beta}{m-2-\beta}}\left(\mint_{t_0-R^p}^{t_0}\mint_{B_R}u_+^m\,dxdt\right)^{\frac{p}{(N+p)(m-2-\beta)}}\sum_{i=0}^n \left(2{\frac{dN}{(N+p)(m-2-\beta)}}\varepsilon\right)^i\\
&\quad+\frac{\mathrm{Tail}_\infty\left(u_+;x_0,R/2,t_0-R^p,t_0\right)}{2}\sum_{i=0}^n\varepsilon^i, \quad n=0,1,2,\ldots.
\end{align*}
It is easy to see that the sum on the right-hand side could be revised by a convergent series, provided that we take
\begin{align*}
\varepsilon=2^{-\left[{\frac{dN}{(N+p)(m-2-\beta)}}+1\right]}.
\end{align*}
Finally, letting $n\rightarrow\infty$, we deduce that
\begin{align*}
\operatorname*{ess\,\sup}_{Q_{R/2}^-} u&\leq \mathrm{Tail}_\infty\left(u_+;x_0,R/2,t_0-R^p, t_0\right)\\
&\quad+C\left(\mint_{Q_R^-} u_+^m\,dx dt\right)^{\frac{p}{(N+p)(m-2-\beta)}} \vee \left(\mint_{Q_R^-} u_+^m\,dx dt\right)^{\frac{p}{(N+p)(m-p-\beta)}},
\end{align*}
where $C>0$ depends only on $N,p,s,m$ and $\Lambda$.
\end{proof}

\section{Local H\"{o}lder continuity}
\label{sec5}

In this section we aim at establishing the H\"{o}lder continuity of weak solutions to \eqref{1.1} in the case that $p>2$, based on the local boundedness results. Before verifying this conclusion, we introduce some notations.

Let
$$
0<\alpha<\frac{p}{p-1}
$$
be a constant to be determined later. Set
\begin{align}
\label{5.1}
r_j:=\frac{\sigma^jr}{2}, \quad \omega(r_0)=\omega(r/2):=M, \quad \omega(r_j):=\left(\frac{r_j}{r_0}\right)^\alpha \omega(r_0), \quad j=0,1,2,3 \ldots
\end{align}
and
\begin{align}
\label{5.2}
M:=C\left[\mathrm{Tail}_\infty\left(u;\overline{x}_0,r/2,\overline{t}_0-r^p, \overline{t}_0+r^p\right)+\left(\mint_{Q_r}|u|^p\,dxdt\right)^{\frac{1}{2}} \vee 1\right]
\end{align}
with $C$ depending on $N,p,s,\Lambda$. Define
\begin{align*}
d_j:= \begin{cases}{\left[\varepsilon \sigma^{(j-1)\alpha} M\right]^{2-p}} & \text { if } j \geq 1, \\ 
1 & \text { if } j=0,\end{cases}
\end{align*}
where
\begin{align*}
\varepsilon=\sigma^{\frac{p}{p-1}-\alpha}.
\end{align*}
Thus it is easy to obtain
\begin{align}
\label{5.3}
\frac{1}{d_{j+1}}=\left[\varepsilon \omega(r_j)\right]^{p-2} \text { for all } j \geq 0 .
\end{align}
Denote
\begin{align*}
B_j:=B_{r_j}\left(\overline{x}_0\right) \text { and } t_j:=d_j r_j^p
\end{align*}
and
\begin{align*}
Q_j:=Q_{r_j,t_j}\left(\overline{x}_0, \overline{t}_0\right)=B_j \times\left(\overline{t}_0-t_j, \overline{t}_0+t_j\right).
\end{align*}
Hence, for $j\geq 1$, we have
\begin{align*}
4\left(\sigma^{\frac{p}{p-1}-\alpha}\right)^{2-p} r_1^p \leq r_0^p \quad\text {and} \quad 4 \sigma^{\alpha(2-p)} r_{j+1}^{p} \leq r_j^p.
\end{align*}
The above inequalities combine with the definitions of $d_j$ and $t_j$ gives that
\begin{align}
\label{5.4}
4 t_{j+1} \leq t_j \text { for all } j \geq 0.
\end{align}

Now we are going to deduce an oscillation reduction on weak solutions.

\begin{lemma}
\label{lem-5-1}
Let $p>2$ and $u$ be a local solution to \eqref{1.1}. Let $\left(\overline{x}_0,\overline{t}_0\right) \in Q_T$, $r\in(0,R]$ for some $R\in(0,1)$ and $Q_R\equiv B(\overline{x}_0) \times\left(\overline{t}_0-R^p, \overline{t}_0+R^p\right)$ such that $\overline{Q}_R \subseteq Q_T$. Then
\begin{align}
\label{5.5}
\operatorname*{ess\,osc}_{Q_j} u\leq \omega(r_j) \text { for all } j=0,1,2, \ldots.
\end{align}
\end{lemma}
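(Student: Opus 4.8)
The plan is to argue by induction on $j$. The base case $j=0$ is the local boundedness result: by Theorem \ref{thm-1-2} (applied on $Q_r$, after a harmless translation so that the cylinder is centered appropriately), together with the bound $\operatorname*{ess\,osc}_{Q_0} u \leq 2\operatorname*{ess\,sup}_{Q_0}|u|$, the definition of $M$ in \eqref{5.2} ensures $\operatorname*{ess\,osc}_{Q_0} u \leq M = \omega(r_0)$ for a suitable choice of the constant $C$. For the inductive step, I would assume \eqref{5.5} holds up to index $j$ and try to prove it for $j+1$. Set $\mu_+:=\operatorname*{ess\,sup}_{Q_j} u$ and $\mu_-:=\operatorname*{ess\,inf}_{Q_j} u$, so that $\mu_+-\mu_-\leq\omega(r_j)$. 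The crux is a dichotomy: either $u$ is, on a sublevel, ``mostly close to $\mu_-$'' on a large fraction of a suitable subcylinder of $Q_j$, or it is ``mostly close to $\mu_+$'', i.e.\ one of
$$
\left|\left\{(x,t)\in Q_{r_{j+1},t_{j+1}}^{\,\text{shifted}}: u(x,t)<\mu_-+\tfrac{\omega(r_j)}{2}\right\}\right| \ \le\ \tfrac12\left|Q_{r_{j+1},t_{j+1}}\right|
$$
or the analogous statement with $u>\mu_+-\omega(r_j)/2$ must hold on at least half of the time slices. Without loss of generality suppose the first; then on a large portion of the cylinder $u-\mu_-$ is bounded below, so I can apply the Logarithmic estimate (Corollary \ref{cor-3-3}) to the function $v:=\min\{(\log(a+d)-\log((u-\mu_-)+d))_+,\log b\}$ with $a$ comparable to $\omega(r_j)$, $d=d_{j+1}\,(\text{scaled})$ chosen exactly so that the intrinsic scaling \eqref{5.3} is respected, and $b$ a large fixed constant. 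The key point of the scaling $t_j=d_jr_j^p$ with $1/d_{j+1}=[\varepsilon\omega(r_j)]^{p-2}$ is that it makes the equation behave, on $Q_{j+1}$, like one with oscillation $\sim 1$; this is the standard DiBenedetto intrinsic-geometry trick for the degenerate case $p>2$.

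Concretely, I would proceed as follows. First, fix the measure-theoretic alternative above by a covering/slicing argument on the time variable, using \eqref{5.4} ($4t_{j+1}\le t_j$) to guarantee that the smaller cylinder $Q_{j+1}$ sits comfortably inside $Q_j$ with room to spare in time. Second, in the chosen alternative, use Corollary \ref{cor-3-3} to control $\int\!\!\int_{Q}|v-(v)_{B_r}|^p\,dxdt$; combined with the measure information and a Poincaré-type inequality this forces the measure of the ``bad'' set $\{v\ge\log b - 1\}$, i.e.\ the set where $u$ is still close to $\mu_-$, to be small on most time slices — this is the ``shrinking of the bad set'' or ``measure-to-pointwise'' mechanism. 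Here the tail term in \eqref{3.11}, namely $T_0 d^{1-p}(r/R)^p[\mathrm{Tail}_\infty(u;\cdots)]^{p-1}$, must be absorbed; with the scaling $d=d_{j+1}$, $T_0=t_{j+1}$, $r=r_{j+1}$ and $R$ replaced by (a multiple of) $r_j$, and using that $\mathrm{Tail}_\infty(u;\cdots)\lesssim M$ while $\omega(r_j)=(r_j/r_0)^\alpha M$, one checks the tail contribution is bounded by a fixed fraction of $|Q_{j+1}|$ provided $\alpha<p/(p-1)$ — this is precisely where that restriction on $\alpha$ enters. Third, feed the smallness of the bad set into a De Giorgi iteration (driven by the Caccioppoli inequality, Lemma \ref{lem-3-1}, applied to the truncations $w_+=(u-k)_+$ with levels $k$ converging to $\mu_++\omega(r_j)/4$, again suitably scaled and combined with Lemma \ref{lem-2-3}) to conclude that $u\le\mu_+-\tfrac{\omega(r_j)}{4}$ (say) on the still smaller cylinder $Q_{j+1}$. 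Hence
$$
\operatorname*{ess\,osc}_{Q_{j+1}} u \ \le\ \omega(r_j) - \tfrac{\omega(r_j)}{4} \ =\ \tfrac34\,\omega(r_j),
$$
and by choosing $\sigma$ so that $(3/4)^{1/\alpha}\ge\sigma$, equivalently $\sigma^\alpha\ge 3/4$ adjusted to match the constants produced by the iteration, we get $\operatorname*{ess\,osc}_{Q_{j+1}}u\le\sigma^\alpha\omega(r_j)=\omega(r_{j+1})$, closing the induction. The parameters $\sigma$, $\alpha$, $b$ and the De Giorgi threshold must be chosen in the right order: first $b$ large depending on $N,p,s,\Lambda$, then $\alpha$ small enough (and $<p/(p-1)$) so the tail and the logarithmic constant cooperate, then $\sigma$ determined by $\alpha$ and the iteration constants.

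The main obstacle I anticipate is the bookkeeping of the intrinsic scaling together with the tail. Because the time-scale $t_j$ depends on $\omega(r_j)^{p-2}$, which itself is the quantity we are trying to control, every application of the Caccioppoli and logarithmic lemmas must be rescaled, and one has to verify that after rescaling $Q_{j+1}$ to a unit cylinder the hypotheses of Lemma \ref{lem-3-1}, Lemma \ref{lem-3-2}/Corollary \ref{cor-3-3} and the Sobolev lemma \ref{lem-2-3} are genuinely met, with all constants independent of $j$. In particular the nonlocal tail contributions in both the Caccioppoli inequality (the third term on the right of \eqref{3.1}) and the logarithmic estimate carry powers of $d_{j+1}$ and $r_j$ that, a priori, could blow up as $j\to\infty$; showing they stay uniformly small is exactly the place where the constraint $\alpha<p/(p-1)$ is forced and where the argument is most delicate. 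A secondary technical point is the ``$\vee\,1$'' in the definition of $M$: one needs $\omega(r_j)$ to stay bounded below by something like a universal constant times $\varepsilon^{\,\text{something}}$ along the iteration so that $d_{j+1}=[\varepsilon\omega(r_j)]^{2-p}$ does not degenerate — this is handled by the usual device of carrying the constant $1$ additively through the oscillation quantities, but it must be tracked consistently.
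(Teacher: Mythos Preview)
Your overall architecture---induction on $j$, a dichotomy on the measure of $\{u\gtrless \mu_\pm\mp\omega(r_j)/2\}$ in $2Q_{j+1}$, a logarithmic estimate (Corollary~\ref{cor-3-3}) to shrink the ``bad'' set, and then a De~Giorgi iteration to pass from small measure to a pointwise bound---is exactly the route the paper takes. However, there is a genuine gap in the step where you absorb the tail.

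You write that, after taking $R$ to be (a multiple of) $r_j$ in Corollary~\ref{cor-3-3}, one uses ``$\mathrm{Tail}_\infty(u;\cdots)\lesssim M$'' and the relation $\omega(r_j)=(r_j/r_0)^\alpha M$ to make the tail contribution $O(t_{j+1})$. This crude bound is too weak. With $d=\varepsilon\omega(r_j)$ and $R=r_j$, the tail term in \eqref{3.11} becomes
\[
C\,t_{j+1}\,[\varepsilon\omega(r_j)]^{1-p}\,\sigma^p\,[\mathrm{Tail}_\infty(u_j;\overline{x}_0,r_j,\cdots)]^{p-1},
\]
and if you only know the tail is $\lesssim M^{p-1}=[\omega(r_0)]^{p-1}$ you pick up a factor $(\omega(r_0)/\omega(r_j))^{p-1}=\sigma^{-j\alpha(p-1)}$, which blows up with $j$; no choice of $\alpha<p/(p-1)$ rescues this. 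What the paper actually proves (display \eqref{5.10}) is the sharper bound
\[
[\mathrm{Tail}_\infty(u_j;\overline{x}_0,r_j,\cdots)]^{p-1}\le C\,\sigma^{-\alpha(p-1)}\,[\omega(r_j)]^{p-1},
\]
and this is obtained by decomposing $\mathbb{R}^N\setminus B_{r_j}$ into the annuli $B_{i-1}\setminus B_i$, $i=1,\dots,j$, together with $\mathbb{R}^N\setminus B_0$, and invoking the induction hypothesis at \emph{every} previous scale via \eqref{5.9} to bound $|u_j|\le 2\omega(r_{i-1})$ on each annulus. The summation $\sum_{i=1}^j(r_j/r_i)^{p}[\omega(r_{i-1})]^{p-1}$ is then a geometric series precisely when $\alpha<p/(p-1)$. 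This annulus argument is the nonlocal crux and is missing from your sketch; the same estimate \eqref{5.10} is needed a second time inside the De~Giorgi iteration (in the term $I_3$ coming from \eqref{3.1}).

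Two smaller points. First, your De~Giorgi step is stated in the wrong direction: in the alternative you chose (measure of $\{u<\mu_-+\omega(r_j)/2\}$ small), the log lemma gives smallness of $\{u_j\le 2\varepsilon\omega(r_j)\}$, and the iteration is on $v_i=(k_i-u_j)_+$ with levels $k_i\downarrow\varepsilon\omega(r_j)$, concluding $u_j\ge\varepsilon\omega(r_j)$ on $Q_{j+1}$; levels ``converging to $\mu_++\omega(r_j)/4$'' with $(u-k)_+$ would give nothing. Second, the oscillation reduction is not by a fixed factor $3/4$ but by $1-\varepsilon$ with $\varepsilon=\sigma^{p/(p-1)-\alpha}$; the order of choice is that $\sigma$ is fixed first (small enough that $\overline{C}/\log(1/\sigma)$ beats the De~Giorgi threshold $\nu^*$), and only then is $\alpha\in(0,p/(p-1))$ chosen so that $\sigma^\alpha\ge 1-\sigma^{p/(p-1)-\alpha}$. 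Also, in the iteration the paper uses Lemma~\ref{lem-2-4} (with $L^p$ in time, after converting $\sup\!\int v_i^2$ to $\sup\!\int v_i^p$ via $k_i^{p-2}\sim d_{j+1}^{-1}$), not Lemma~\ref{lem-2-3}.
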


\begin{proof}
We prove this Lemma by induction argument. It follows from Theorem \ref{thm-1-2} and the definition of $\omega(r_0)$ that \eqref{5.5} holds true for $j=0$. We may assume that \eqref{5.5} is valid for $i \in\{0, \ldots j\}$ with some $j \geq 0$. Then we devoted to proving it holds for $i=j+1$. It is obvious that either
\begin{align}
\label{5.6}
\frac{\left|2 Q_{j+1} \cap\left\{u \geq \operatorname{ess\,\inf}_{Q_j} u+\omega(r_j) / 2\right\}\right|}{\left|2 Q_{j+1}\right|} \geq \frac{1}{2}
\end{align}
or
\begin{align}
\label{5.7}
\frac{\left|2 Q_{j+1} \cap\left\{u \leq \operatorname{ess\,\inf}_{Q_j} u+\omega(r_j) / 2\right\}\right|}{\left|2 Q_{j+1}\right|} \geq \frac{1}{2}
\end{align}
must hold. In the case of \eqref{5.6}, we set $u_j:=u-\operatorname*{ess\,\inf}_{Q_j} u$. In the case of \eqref{5.7}, we set $u_j:=\omega(r_j)-\left(u-\operatorname*{ess\,\inf}_{Q_j} u\right)$. In all cases, we have
\begin{align}
\label{5.8}
\frac{\left|2 Q_{j+1} \cap\left\{u_j \geq \omega(r_j)/2\right\}\right|}{\left|2 Q_{j+1}\right|} \geq \frac{1}{2}
\end{align}
and
\begin{align}
\label{5.9}
0 \leq \operatorname*{ess\,\inf}_{Q_i} u_j \leq \operatorname*{ess\,\sup}_{Q_i} u_j \leq 2 \omega(r_i) \text { for } i=0, \ldots, j.
\end{align}
Now we provide an important estimate to be used later,
\begin{align}
\label{5.10}
\left[\mathrm{Tail}_\infty\left(u_j;\overline{x}_0,r_j,\overline{t}_0-t_j, \overline{t}_0+t_j\right)\right]^{p-1}\leq C \sigma^{-\alpha(p-1)}\left[\omega(r_j)\right]^{p-1} \text { for } j=0,1,2 \ldots,
\end{align}
where $C$ only depends on $N,p,s$, the difference of $p/(p-1)$ and $\alpha$. Indeed,
it is easy to see the claim is true when $j=0$. For $j\geq1$, we have
\begin{align*}
&\quad\left[\mathrm{Tail}_\infty\left(u_j; \overline{x}_0, r_j, \overline{t}_0-t_j, \overline{t}_0+t_j\right)\right]^{p-1}\nonumber\\
&=r_j^p\operatorname*{ess\,\sup}_{t \in(\overline{t}_0-t_j, \overline{t}_0+t_j)} \sum_{i=1}^j \int_{B_{i-1} \backslash B_i} \frac{\left|u_j(x, t)\right|^{p-1}}{\left|x-\overline{x}_0\right|^{N+sp}}\,dx\nonumber\\
&\quad+r_j^p \operatorname*{ess\,\sup}_{t \in(\overline{t}_0-t_j, \overline{t}_0+t_j)} \int_{\mathbb{R}^N \backslash B_0} \frac{\left|u_j(x, t)\right|^{p-1}}{\left|x-\overline{x}_0\right|^{N+sp}}\,dx\nonumber\\
&\leq r_j^p \sum_{i=1}^j\left(\operatorname*{ess\,\sup}_{Q_{i-1}}   u_j\right)^{p-1} \int_{\mathbb{R}^N \backslash B_i} \frac{1}{\left|x-\overline{x}_0\right|^{N+sp}}\,dx \nonumber\\
&\quad+r_j^p \operatorname*{ess\,\sup}_{t \in(\overline{t}_0-t_j, \overline{t}_0+t_j)} \int_{\mathbb{R}^N \backslash B_0} \frac{\left|u_j(x, t)\right|^{p-1}}{\left|x-\overline{x}_0\right|^{N+sp}}\,dx\nonumber\\
&\leq C \sum_{i=1}^{j}\left(\frac{r_j}{r_i}\right)^p[\omega(r_{i-1})]^{p-1},
\end{align*}
where in the last line we used \eqref{5.9} and the definition of $u_j$. Since $\sigma\leq1/4$ and $\alpha<p/(p-1)$, we estimate the right hand side as
\begin{align*}
&\quad\sum_{i=1}^{j}\left(\frac{r_j}{r_i}\right)^p[\omega(r_{i-1})]^{p-1}\\
&=\left[\omega(r_0)\right]^{p-1}\left(\frac{r_j}{r_0}\right)^{\alpha(p-1)} \sum_{i=1}^j\left(\frac{r_{i-1}}{r_i}\right)^{\alpha(p-1)}\left(\frac{r_j}{r_i}\right)^{p-\alpha(p-1)}\\
&=\left[\omega(r_j)\right]^{p-1} \sigma^{-\alpha(p-1)} \sum_{i=0}^{j-1} \sigma^{i\left( p-\alpha(p-1)\right)}\\
&\leq\left[\omega(r_j)\right]^{p-1} \frac{\sigma^{-\alpha(p-1)}}{1-\sigma^{p-\alpha(p-1)}} \\
&\leq\frac{4^{p-\alpha(p-1)}}{\left(p-\alpha(p-1)\right) \log 4} \sigma^{-\alpha(p-1)}\left[\omega(r_j)\right]^{p-1}.
\end{align*}
Thus, we have proved \eqref{5.10} with the constant $C$ depending on $N,p,s$, the difference of $p/(p-1)$ and $\alpha$.
Next, we define
\begin{align}
\label{5.11}
v:=\min \left\{\left[\log \left(\frac{\omega(r_j)/2+d}{u_j+d}\right)\right]_+, k\right\} \quad \text { for } k>0 .
\end{align}
It follows from Corollary \ref{cor-3-3} with $a\equiv\omega(r_j)/2$ and  $b\equiv\exp(k)$ that
\begin{align*}
&\quad\int_{\overline{t}_0-2 t_{j+1}}^{\overline{t}_0+2 t_{j+1}} \mint_{2 B_{j+1}}\left|v(x,t)-(v)_{2B_{j+1}}(t)\right|^p\,dxdt\\
&\leq C t_{j+1} d^{1-p}\left(\frac{r_{j+1}}{r_j}\right)^p\left[\mathrm{Tail }_\infty\left(u_j;\overline{x}_0,r_j,\overline{t}_0-4 t_{j+1}, \overline{t}_0+4 t_{j+1}\right)\right]^{p-1} \\
&\quad+C t_{j+1}+C d^{2-p} r_{j+1}^p+C t_{j+1} r_{j+1}^{p-sp}\\
&\leq C t_{j+1} d^{1-p}\left[\varepsilon \omega(r_j)\right]^{p-1}+C t_{j+1}+C d^{2-p} r_{j+1}^p,
\end{align*}
where in the last line we used the fact $4 t_{j+1} \leq t_j$ and \eqref{5.10}.
Choosing $d=\varepsilon\omega(r_j)$ in \eqref{5.11}, we get
\begin{align*}
d^{2-p}=d_{j+1}.
\end{align*}
Since $\alpha<p/(p-1)$, we can verify
\begin{align*}
d^{1-p}\leq r_{j+1}^{-p}.
\end{align*}
Thus, we arrive at
\begin{align*}
\int_{\overline{t}_0-2 t_{j+1}}^{\overline{t}_0+2 t_{j+1}} \mint_{2 B_{j+1}}\left|v(x,t)-(v)_{2B_{j+1}}(t)\right|^p\,dxdt\leq Ct_{j+1},
\end{align*}
where $C$ depends on $N,p,s,\Lambda$, the difference of $p/(p-1)$ and $\alpha$.
Following the calculation in Pages 38-39 in \cite{DZZ21}, there holds that
\begin{align}
\label{5.12}
\frac{\left|2 Q_{j+1} \cap\left\{u_j \leq 2 \varepsilon \omega(r_j)\right\}\right|}{\left|2 Q_{j+1}\right|} \leq \frac{\overline{C}}{\log \left(\frac{1}{\sigma}\right)},
\end{align}
where $\overline{C}>0$ depends on $N,p,s,\Lambda$, the difference of $p/(p-1)$ and $\alpha$.

In what follows, we will proceed by a suitable iteration to infer the desired oscillation decay over the domain $Q_{j+1}$. For any $i=$ $0,1,2, \ldots$, we define
\begin{align*}
&\varrho_i=r_{j+1}+2^{-i} r_{j+1}, \quad \tilde{\varrho}_i:=\frac{\varrho_i+\varrho_{i+1}}{2}, \\
&\theta_i:=t_{j+1}+2^{-i} t_{j+1}, \quad \tilde{\theta}_i:=\frac{\theta_i+\theta_{i+1}}{2}, \\
&Q^i:=B^i \times \Gamma_i:=B_{\varrho_i}(\overline{x}_0) \times(\overline{t}_0-\theta_{i}, \overline{t}_0+\theta_{i}), \\
&\tilde{Q}^i:=\tilde{B}^i \times \tilde{\varrho}_i:=B_{\tilde{\varrho} i}(\overline{x}_0) \times(\overline{t}_0-\tilde{\theta}_i, \overline{t}_0+\tilde{\theta}_i).
\end{align*}
Then we take the cut-off function $\psi_i \in C_0^\infty(\tilde{B}^i)$ and $\eta_i \in C_0^\infty(\tilde{\Gamma}_i)$ such that
\begin{align*}
0 \leq \psi_i \leq 1,\quad\left|\nabla \psi_i\right| \leq C 2^i r_{j+1}^{-1} \text { in } \tilde{B}^i, \quad \psi_i \equiv 1 \text { in } B^{i+1}
\end{align*}
and
\begin{align*}
0 \leq \eta_i \leq 1,\quad
\left|\partial_t \eta_i\right| \leq C 2^i t_{j+1}^{-1} \text { in } \tilde{\Gamma}_i, \quad \eta_i \equiv 1 \text { in } \Gamma_{i+1}.
\end{align*}
Define
\begin{align*}
k_i:=(1+2^{-i}) \varepsilon \omega(r_j), \quad v_i:=(k_i-u_j)_+.
\end{align*}
Taking $\ell=\theta_i-\theta_{i+1}, \tau_1=\overline{t}_0-\theta_{i+1}$ and $\tau_2=\overline{t}_0+\theta_{i+1}$ in Lemma \ref{lem-3-1} to get
\begin{align*}
&\quad\int_{\Gamma_{i+1}} \mint_{B^{i}}|\nabla v_i(x,t)|^p\psi^p_i(x)\,dxdt+\operatorname*{ess\,\sup}_{t\in\Gamma_{i+1}}\mint_{B^i} v_i^2(x,t) \psi_i^p(x)\,dx\nonumber\\
&\quad+\int_{\Gamma_{i+1}} \int_{B^i} \mint_{B^i} \frac{\left|v_i(x,t) \psi_i(x)-v_i(y,t) \psi_i(y)\right|^p}{|x-y|^{N+sp}}\,dxdydt\\
&\leq C \int_{\Gamma_i} \mint_{B^i} |\nabla \psi_i(x)|^p v_i^p(x,t)\eta_i^2(t)\,dxdt\\
&\quad+C \int_{\Gamma_i} \int_{B^i} \mint_{B^i} \max \left\{v_i(x,t), v_i(y,t)\right\}^p\left|\psi_i(x)-\psi_i(y)\right|^p \eta_i^2(t)\,d\mu dt \\
&\quad+C\mathop{\mathrm{ess}\,\sup}_{\stackrel{t\in\Gamma_i}{x \in \mathrm{supp} \psi_i}}\int_{\mathbb{R}^N \backslash B^i} \frac{v_i^{p-1}(y,t)}{|x-y|^{N+sp}}\,dy \int_{\Gamma_i} \mint_{B^i} v_i(x, t) \psi_i^p(x) \eta_i^2(t)\,dxdt \\
&\quad+C \int_{\Gamma_i} \mint_{B^i} v_i^2(x,t) \psi_i^p(x) \eta_i(t)\left|\partial_t \eta_i(t)\right|\,dxdt\\
&=:I_1+I_2+I_3+I_4.
\end{align*}
We estimate $I_1$ as
\begin{align*}
I_1&\leq C k_i^p2^{pi}r_{j+1}^{-p}\int_{{\Gamma}_i}\mint_{B_i} \chi_{\left\{u_j \leq k_i\right\}}(x,t)\,dxdt\\
 &\leq C 2^{pi}r_{j+1}^{-p}[\varepsilon \omega(r_j)]^p\int_{{\Gamma}_i}\mint_{B_i} \chi_{\left\{u_j \leq k_i\right\}}(x,t)\,dxdt,
\end{align*}
where we used the properties of $\psi_i$. As the computations in Pages 39-40 in \cite{DZZ21}, we derive
\begin{align*}
I_2\leq C 2^{pi} r_{j+1}^{-p}[\varepsilon \omega\left(r_j\right)]^p \int_{{\Gamma}_i}\mint_{B_i} \chi_{\left\{u_j \leq k_i\right\}}(x,t)\,dxdt,
\end{align*}
\begin{align*}
I_3\leq C 2^{(N+sp)i}r_{j+1}^{-p}[\varepsilon \omega\left(r_j\right)]^p \int_{{\Gamma}_i}\mint_{B_i} \chi_{\left\{u_j \leq k_i\right\}}(x,t)\,dxdt
\end{align*}
and
\begin{align*}
I_4\leq C 2^{spi}r_{j+1}^{-p}[\varepsilon \omega\left(r_j\right)]^p \int_{{\Gamma}_i}\mint_{B_i} \chi_{\left\{u_j \leq k_i\right\}}(x,t)\,dxdt.
\end{align*}
Since $\psi_i\equiv1$ in $B^{i+1}$, we can deduce that
\begin{align}
\label{5.13}
&\quad\int_{\Gamma_{i+1}} \mint_{B^{i+1}}|\nabla v_i(x,t)|^p\,dxdt+\int_{\Gamma_{i+1}} \int_{B^{i+1}} \mint_{B^{i+1}} \frac{\left|v_i(x,t)-v_i(y,t)\right|^p}{|x-y|^{N+sp}}\,dxdydt\nonumber\\
&\quad+\operatorname*{ess\,\sup}_{t \in \Gamma_{i+1}}\mint_{B^{i+1}} v_i^2(x,t)\,dx\nonumber\\
&\leq C 2^{(N+p) i}r_{j+1}^{-p}[\varepsilon \omega(r_j)]^p  \int_{\Gamma_i} \mint_{B^i} \chi_{\left\{u_j \leq k_i\right\}}(x,t)\,dxdt.
\end{align}
From \eqref{5.3}, we get
\begin{align}
\label{5.14}
\operatorname*{ess\,\sup}_{t \in \Gamma_{i+1}} \mint_{B^{i+1}} v_i^p(x,t)\,dx & \leq k_i^{p-2} \operatorname*{ess\,\sup}_{t \in \Gamma_{i+1}} \mint_{B^{i+1}} v_i^2(x,t)\,dx\nonumber\\
& \leq C d_{j+1}^{-1} \operatorname*{ess\,\sup}_{t \in \Gamma_{i+1}} \mint_{B^{i+1}} v_i^2(x,t)\,dx.
\end{align}
Combining \eqref{5.13} with \eqref{5.14} gives that
\begin{align}
\label{5.15}
&\quad r_{j+1}^p\mint_{\Gamma_{i+1}} \mint_{B^{i+1}} |\nabla v_i(x,t)|^p\,dxdt+\operatorname*{ess\,\sup}_{t \in \Gamma_{i+1}} \mint_{B^{i+1}} v_i^p(x,t)\,dx\nonumber\\
&\leq d_{j+1}^{-1}\int_{\Gamma_{i+1}} \mint_{B^{i+1}} |\nabla v_i(x,t)|^p\,dxdt+d_{j+1}^{-1}\operatorname*{ess\,\sup}_{t \in \Gamma_{i+1}} \mint_{B^{i+1}} v_i^2(x,t)\,dx\nonumber\\
&\leq C 2^{(N+p)i} r_{j+1}^{-p} d_{j+1}^{-1}[\varepsilon \omega(r_j)]^p \int_{\Gamma_i} \mint_{B^i} \chi_{\left\{u_j \leq k_i\right\}}(x,t)\,dxdt\nonumber \\
&\leq C2^{(N+p)i}[\varepsilon \omega(r_j)]^p A_i,
\end{align}
where $A_i$ is denoted by
\begin{align*}
A_i:=\frac{\left|Q_i \cap\left\{u_j \leq k_i\right\}\right|}{\left|Q_i\right|}.
\end{align*}
In view of Lemma \ref{lem-2-4}, we can see
\begin{align}
\label{5.16}
\int_{\Gamma_{i+1}} \mint_{B^{i+1}} v_i^{p(1+\frac{p}{N})}(x,t)\,dxdt &\leq Cr_{j+1}^p \int_{\Gamma_{i+1}} \mint_{B^{i+1}} |\nabla v_i(x,t)|^p\,dxdt\nonumber\\
&\quad\times\left(\operatorname*{ess\,\sup}_{t \in \Gamma_{i+1}} \mint_{B^{i+1}} v_i^p(x,t)\,dx\right)^{\frac{p}{N}}.
\end{align}
Merging \eqref{5.15} and \eqref{5.16} leads to

\begin{align*}
A_{i+1}\left(k_i-k_{i+1}\right)^{p(1+\frac{p}{N})}  &\leq \mint_{\Gamma_{i+1}} \mint_{B^{i+1} \cap\left\{u_j\leq k_{i+1}\right\}} v_i^{p(1+\frac{p}{N})}(x,t)\,dx dt\\
&\leq  Cr_{j+1}^p \mint_{\Gamma_{i+1}} \mint_{B^{i+1}} |\nabla v_i(x,t)|^p\,dxdt\nonumber\times\left(\operatorname*{ess\,\sup}_{t \in \Gamma_{i+1}} \mint_{B^{i+1}} v_i^p(x,t)\,dx\right)^{\frac{p}{N}}\\
&\leq  C\left[2^{(N+p)i}(\varepsilon \omega(r_j))^p A_i\right]^{1+\frac{p}{N}}.
\end{align*}
We can readily get the recursive inequality
\begin{align*}
A_{i+1} \leq \tilde{C} 2^{(N+p)(1+\frac{p}{N})i} A_i^{1+\frac{p}{N}},
\end{align*}
where $\tilde{C}$ depends on $N,p,s,\Lambda$, the difference of $p/(p-1)$ and $\alpha$.
Set
\begin{align*}
\nu^*:=\tilde{C}^{-\frac{N}{p}} 2^{\frac{-N(N+p)^2}{p^2}}.
\end{align*}
Then we take
\begin{align*}
\sigma=\min \left\{\frac{1}{4},\exp \left(-\frac{\overline{C}}{\nu^*}\right)\right\}.
\end{align*}
Utilizing the definition of $A_i$, we obtain
\begin{align*}
A_0=\frac{\left|2 Q_{j+1} \cap\left\{u_j \leq 2 \varepsilon \omega(r_j)\right\}\right|}{\left|2 Q_{j+1}\right|} \leq \nu^*.
\end{align*}
It follows from Lemma \ref{lem-4-4} that
$$
\lim_{i\rightarrow\infty}A_i=0,
$$
which implies that
\begin{align*}
u_j(x,t) \geq \varepsilon \omega(r_j) \text { in } Q_{j+1}.
\end{align*}
Recalling the definition of $u_j$, we have
\begin{align}
\label{5.17}
\operatorname*{ess\,osc}_{Q_{j+1}} u \leq(1-\varepsilon) \omega(r_j)=(1-\varepsilon) \sigma^{-\alpha} \omega(r_{j+1}).
\end{align}
Now, we pick $\alpha\in(0,p/(p-1))$ such that
\begin{align*}
\sigma^\alpha\geq 1-\varepsilon=1-\sigma^{\frac{p}{p-1}-\alpha},
\end{align*}
which together with \eqref{5.17} ensures that
\begin{align*}
\operatorname*{ess\,osc}_{Q_{j+1}} u\leq \omega(r_{j+1}).
\end{align*}
Now we finish the proof.
\end{proof}

\begin{proof}[\textbf{Proof of Theorem \ref{thm-1-5}}]
Let $p>2$.  Assume that $u$ be a local weak solution of \eqref{1.1}. Let $(x_0,t_0) \in Q_T$, $R \in(0,1)$ and $Q_R \equiv B_R(x_0) \times(t_0-R^p,t_0+R^p)$ such that $\overline{Q}_R\subseteq Q_T$. Taking $r=R$ in Lemma \ref{lem-5-1} we can get
\begin{align}
\label{5.18}
\operatorname*{ess\,osc}_{Q_j} u \leq C\left(\frac{r_j}{R}\right)^{\alpha} \omega\left(\frac{R}{2}\right) \text { for all } j \in \mathbb{N}
\end{align}
with $\alpha<p/(p-1)$, $\sigma<1/4$, where $C\geq 1$ depends on $N,p,s,\Lambda,$ and
\begin{align}
\label{5.19}
\omega\left(\frac{R}{2}\right)=\mathrm{Tail}_\infty\left(u;x_0,R/2,t_0-R^p, t_0+R^p\right)+\left(\mint_{Q_R}|u|^p \,dxdt\right)^{\frac{1}{2}} \vee 1.
\end{align}
For every $\rho\in(0,R/2]$, we have $\rho \in\left(r_{j_0+1}, r_{j_0}\right]$ for
$j_0 \in \mathbb{N}$. Choosing $d=[C\omega(R/2)]^{2-p}$, it follows that $Q_{\rho, d \rho^p} \subseteq Q_{j_0}$. By applying \eqref{5.18}, we get
\begin{align*}
\operatorname*{ess\,osc}_{Q_{\rho, d\rho^p} } u \leq \operatorname*{ess\,osc}_{Q_{j_0}} u \leq C \sigma^{-\alpha}\left(\frac{r_{j_0+1}}{R}\right)^{\alpha} \omega\left(\frac{R}{2}\right) \leq C \sigma^{-\alpha}\left(\frac{\rho}{R}\right)^{\alpha} \omega\left(\frac{R}{2}\right) .
\end{align*}
 We can obtain the H\"{o}lder continuity from the above inequality along with \eqref{5.19}.
\end{proof}

\section*{Acknowledgment}
This work was supported by the National Natural Science Foundation of China (No. 12071098).

\end{document}